\newtheorem{theorem}{Theorem}[section]
\newtheorem{proposition}[theorem]{Proposition}
\newtheorem{remark}[theorem]{Remark}
\newtheorem{lemma}[theorem]{Lemma}
\newtheorem{cor}[theorem]{Corollary}
\theoremstyle{definition}
\newtheorem{definition}[theorem]{Definition}
\newtheorem{example}[theorem]{Example}
\numberwithin{equation}{section}
\newcommand\nn{\mathbb{N}}
\newcommand\qq{\mathbb{Q}}
\newcommand\zz{\mathbb{Z}}
\begin{document}%%%%%%%%%%%%%%%%%%%%%%%%%%%%%%%%%%%%%%%
	
	\mbox{}
	\title{On the Sets of Lengths of Puiseux Monoids Generated by Multiple Geometric Sequences}
	\keywords{Puiseux monoids, factorization theory, factorization invariants, set of lengths, union of sets of lengths, set of distances, delta set}
	\author{Harold Polo}
	\subjclass[2010]{Primary: 20M13; Secondary: 06F05, 20M14}
	\address{Mathematics Department\\University of Florida\\Gainesville, FL 32611}
	\email{haroldpolo@ufl.edu}
	\date{\today}
	
	\begin{abstract}
	 In this paper, we study some of the factorization aspects of rational multicyclic monoids, that is, additive submonoids of the nonnegative rational numbers generated by multiple geometric sequences. In particular, we provide a complete description of the rational multicyclic monoids $M$ that are hereditarily atomic (i.e., every submonoid of $M$ is atomic). Additionally, we show that the sets of lengths of certain rational multicyclic monoids are finite unions of multidimensional arithmetic progressions, while their unions satisfy the Structure Theorem for Unions of Sets of Lengths. Finally, we realize arithmetic progressions as the sets of distances of some additive submonoids of the nonnegative rational numbers.
	\end{abstract}
	
	\maketitle
	
	%%%%%%%%%%%%%%%%%%%%%%%%%%%%%%%%%%%%%%%%%%%%%%%%%%%%%%
	\section{Introduction} \label{sec:intro}%%%%%%%%%%%%%%%%%%%%%%%%%% \label{sec:intro}%%%%%%%%%%%%%%%%%%%%%%%%%%%%%%%%%%%%%%%%%%%
	The ring of integers $\mathcal{O}_K$ of an algebraic number field $K$ is not, in general, a unique factorization domain. However, we can still represent a nonzero nonunit element of $\mathcal{O}_K$ as a product of irreducibles. This poses the question of whether we can measure, through algebraic invariants, how far is an integral domain from being a unique factorization domain; this query is one of the driving forces behind factorization theory. Nowadays, factorization theory has branched out into several subfields of algebra, and many authors study the factorization aspects of objects such as matrices~\cite{DBNBJG2014}, modules~\cite{NBAG2014}, and orders~\cite{Smertnig2013}. 
	
	The systematic study of the factorization properties of additive submonoids of the nonnegative cone of $\qq$, also known as Puiseux monoids, started just three years ago in~\cite{GOTTI16}. Nevertheless, Puiseux monoids have been used to provide crucial examples in the realms of commutative ring theory due to the intrinsic complexity of their atomicity. For instance, Grams utilized these monoids to refute Cohn's assumption that every atomic integral domain satisfies the ACCP~\cite{Grams}. More recently, Coykendall and Gotti~\cite{JCFG2019} used Puiseux monoids to partially answer a question proposed by Gilmer in the 1980s~\cite[page 189]{Gilmer}. 
	
	Sets of lengths are perhaps the most investigated factorization invariants; they have been studied in the context of Krull monoids~\cite{AGWSQZ}, C-monoids~\cite{AFWH}, finitely generated monoids~\cite{HK1997}, and submonoids of $\nn_0^d$~\cite{GOTTI2019} (see also~\cite{YFAGFKST, AGFGST} for some recent work). An exhaustive description of the sets of lengths of Puiseux monoids is a herculean task. For example, if $M$ is the monoid generated by the reciprocals of primes then $\mathsf{L}(1)$ is the set of prime numbers and $\mathsf{L}(2)$ is the set of Goldbach's numbers~\cite[Section 6]{GOTTI2019L}. Then, how does one go about studying the sets of lengths of Puiseux monoids? This question motivated a series of articles investigating the sets of lengths of Puiseux monoids generated by well-structured sets (see, for example,~\cite{ScGG2019,GoOn2017}). In~\cite[Theorem~3.3]{ScGG2019}, Chapman et al. proved that the sets of lengths of a rational cyclic monoid (i.e., a Puiseux monoid generated by the elements of a geometric sequence) are arithmetic progressions. We extend this result to monoids that are natural generalizations of rational cyclic monoids and that we call canonical rational multicyclic monoids.
	
	The outline of this paper is as follows. In Section~2, we establish the notation we will be using throughout this article. Next we provide, in Section~3, a complete description of the rational multicyclic monoids (i.e., additive submonoids of the nonnegative rational numbers generated by multiple geometric sequences) that are hereditarily atomic. In Section~4, we study the sets of lengths of canonical rational multicyclic monoids. Here we prove that the sets of lengths of a canonical rational multicyclic monoid are finite unions of multidimensional arithmetic progressions (see Definition~\ref{def: MAP}), and we use this result to realize arithmetic progressions as the sets of distances of certain Puiseux monoids. Additionally, we show that canonical rational multicyclic monoids satisfy the Structure Theorem for Unions of Sets of Lengths (as stated in \cite[Theorem~4.2]{GaGe09}).

	%%%%%%%%%%%%%%%%%%%%%%%%%%%%%%%%%%%%%%%%%%%%%%%%
	\section{Background} \label{sec:definition of GL}%%%%%%%%%%%%%%%%%
	%%%%%%%%%%%%%%%%%%%%%%%%%%%%%%%%%%%%%%%%%%%%%%%%
	\subsection{Notation}
	Throughout this paper, we let $\nn$ and $\nn_0$ denote the set of positive and nonnegative integers, respectively, while the symbol $\mathbb{P}$ stands for the set of prime numbers. For nonnegative integers $m$ and $n$, let $\llbracket m, n \rrbracket$ be the set of integers between $m$ and $n$, i.e.,
	\[
		\llbracket m,n \rrbracket \coloneqq \{k \in\nn_0 \mid m \leq k \leq n\}.
	\]
	Given a subset $S$ of the rational numbers, we let $S_{\geq t}$ denote the set of nonnegative elements of $S$ that are greater than or equal to $t$. In the same spirit we define $S_{>t}$, $S_{\leq t}$ and $S_{<t}$. For a positive rational number $q$, the relatively prime positive integers $n$ and $d$ for which $q = n/d$ are denoted by $\mathsf{n}(q)$ and $\mathsf{d}(q)$, respectively. We say that a positive fraction $n/m$ is \emph{proper} if $n < m$; otherwise, we denote $n/m$ \emph{improper}. Given $L_1, \ldots, L_n \subseteq \zz$ we denote $L_1 + \cdots + L_n = \left\{ l_1 + \cdots + l_n \mid l_i \in L_i \right\}$.

	\begin{definition} \label{def: MAP}
		For $d \in\nn$ and $l \in\nn_0 \cup \{\infty\}$ we set $P_l(d) \coloneqq d\zz \cap [0, ld]$. If $r$ is a positive integer then a nonempty subset $S \subseteq \nn_0$ is called an \emph{$r$-dimensional arithmetic progression} (with differences $d_1, \ldots, d_r \in\nn$) if
		\[
			S = \min S + \sum_{i = 1}^{r} P_{l_i}(d_i)
		\]
		for some $l_1, \ldots, l_r \in\nn_0\cup\{\infty\}$. We say that $S$ is a \emph{multidimensional arithmetic progression} (or \emph{MAP}) if $S$ is an $r$-dimensional arithmetic progression for some positive integer $r$.
	\end{definition}

	\begin{definition} \label{def: AAP}
		For $d \in \nn$ and $N \in \nn_0$, we say that a subset $S \subseteq \zz$ is an \emph{almost arithmetic progression} (or \emph{AAP}) with difference $d$ and bound $N$ if
		\[
		S = y + (S' + S^{*} + S'') \subseteq y + d\zz,
		\] 
		where $y \in\zz$ and $S^{*}$ is a nonempty arithmetic progression with difference $d$ such that $\min S^* = 0$, $S' \subseteq [-N,-1]$, and $S'' \subseteq \sup S^* + [1,M]$ (we assume that $S'' = \emptyset$ provided that $S^*$ is infinite).
	\end{definition}

	\subsection{Puiseux monoids}
	We say that a monoid $M$ is \emph{reduced} if the only invertible element of $M$ is the identity. From now on we assume that all monoids here are commutative, cancellative, and reduced. Let $M$ be a monoid, which is written additively (so we call the identity  element \emph{zero}). A nonzero element $x \in M$ is an \emph{atom} with the condition that $x$ cannot be expressed as the sum of two nonzero elements of $M$. We let $\mathcal{A}(M)$ represent the set of atoms of $M$. Now for a subset $S \subseteq M$, we denote by $\langle S \rangle$ the minimal submonoid of $M$ including $S$, and if $M = \langle S \rangle$ then we say that $S$ is a \emph{generating set} of $M$. The monoid $M$ is \emph{atomic} provided that $M = \langle\mathcal{A}(M)\rangle$. In addition, $M$ is called \emph{hereditarily atomic} if every submonoid of $M$ is atomic. On the other hand, we say that $x$ \emph{divides} y in $M$, denoted by $x \,|_M \,y$, if there exists $x' \in M$ such that $y = x + x'$ with $x,y \in M$. As usual, we use the notation $x \,|\, y$ to indicate that $x,y \in\zz$ and $x \,|_{(\zz,\times)}\, y$ with $x \neq 0$. 
	
	A subset $I$ of $M$ is an \emph{ideal} of $M$ provided that $I + M \subseteq I$; we say that $I$ is \emph{principal} if $I = x + M$ for some $x \in M$. The monoid $M$ satisfies the \emph{ascending chain condition on principal ideals} (or \emph{ACCP}) if there is no strictly increasing sequence of principal ideals of $M$. A monoid that satisfies the ACCP is atomic~(\cite[Proposition~1.1.4]{AGHK06}).
	
	\begin{definition} 
		A \emph{Puiseux monoid} is an additive submonoid of $\qq_{\geq 0}$.
	\end{definition}
	
	Puiseux monoids have a fascinating atomic structure. While some Puiseux monoids have no atoms at all~(\cite[Example~3.3]{GOTTI16}), others have exactly $m$ atoms for each positive integer $m$~(\cite[Proposition~5.4]{GOTTI16}). The atomicity of these monoids has received considerable attention lately (see~\cite{CGG2019} and references therein). The most studied family of Puiseux monoids is that one consisting of all rational cyclic monoids. 
	
	\begin{definition}
		The \emph{rational cyclic monoid}\footnote{Although these monoids are semirings (and were named `rational cyclic semirings' in~\cite{ScGG2019}), the operation of multiplication plays no role here.} over $r \in \qq_{>0}$ is the monoid generated by the nonnegative powers of $r$, i.e., $M_r \coloneqq \langle r^n \mid n\in\nn_0\rangle$. 
	\end{definition}
	
	\noindent To check whether a rational cyclic monoid is atomic is straightforward. Consider the following theorem.
		
	\begin{theorem}\cite[Theorem 6.2]{GG2018} \label{theorem: cyclic rational semirings are atomic unless generated by unit fraction}
		Let $r \in \qq_{>0}$ and consider the rational cyclic monoid $M_r$. The following statements hold:
		\begin{enumerate}
			\item if $\mathsf{d}(r) = 1$ then $M_r$ is atomic with $\mathcal{A}(M_r) = \{1\}$;
			\item if $\mathsf{d}(r) > 1$ and $\mathsf{n}(r) = 1$ then $M_r$ is not atomic with $\mathcal{A}(M_r) = \emptyset$;
			\item if $\mathsf{d}(r) > 1$ and $\mathsf{n}(r) > 1$ then $M_r$ is atomic with $\mathcal{A}(M_r) = \{r^n \mid n \in \nn_0\}$.
		\end{enumerate} 
	\end{theorem}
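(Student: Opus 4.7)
My plan is to dispatch cases~(1) and~(2) by short direct constructions and to treat case~(3) by induction on the exponent. For case~(1), $\mathsf{d}(r) = 1$ forces $r \in \nn$, and combined with $1 = r^0 \in M_r$ this gives $M_r = \nn_0$, whose atoms form $\{1\}$. For case~(2), write $r = 1/d$ with $d \geq 2$. The relation $r^i = d \cdot r^{i+1}$ decomposes each generator as $r^i = r^{i+1} + (d-1) r^{i+1}$, a sum of two positive elements of $M_r$ since $d \geq 2$. For a general positive $x \in M_r$, either $x$ is a generator (hence decomposable as above) or every representation $\sum c_i r^i$ of $x$ satisfies $\sum c_i \geq 2$, in which case splitting off a single generator decomposes $x$. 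Either way $x$ is not an atom, so $\mathcal{A}(M_r) = \emptyset$.

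For case~(3), write $r = n/d$ with $n, d > 1$ and $\gcd(n,d) = 1$. A preliminary observation: any atom of $M_r$ must lie in $\{r^n : n \geq 0\}$, because a representation $\sum c_i r^i$ with $\sum c_i \geq 2$ can be split as $r^{j_0} + (x - r^{j_0})$ to exhibit a nontrivial decomposition. So it suffices to show each $r^k$ is an atom, which I prove by induction on $k$. Assuming $r^k = \sum_i c_i r^i$ with $c_i \in \nn_0$ and $\sum_i c_i \geq 2$ and clearing denominators by $d^M$ for large $M$ yields
\[
    n^k d^{M-k} = \sum_{i=0}^{M} c_i\, n^i d^{M-i}.
\]
For the base case $k = 0$, reducing mod~$n$ and using $\gcd(n,d) = 1$ forces $c_0 \equiv 1 \pmod n$; combined with the size bound $c_0 d^M \leq d^M$ this pins down $c_0 = 1$ and $c_i = 0$ for $i \geq 1$, contradicting $\sum c_i \geq 2$.

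For the inductive step $k \geq 1$, the mod-$n$ reduction now yields $n \mid c_0$, so $c_0 d / n \in \nn_0$. Dividing the identity by $n d^{M-1}$ and re-indexing $j = i-1$ produces the representation
\[
    r^{k-1} = \left(\tfrac{c_0 d}{n} + c_1\right) + \sum_{j \geq 1} c_{j+1}\, r^j
\]
in $M_r$, with nonnegative integer coefficients. Since $r^{k-1}$ is an atom by the inductive hypothesis, this representation must have coefficient sum $1$ with the unique nonzero entry at position $k-1$; back-substituting forces $c_k = 1$ and $c_i = 0$ for $i \neq k$ (the boundary $k = 1$ needs the observation that $c_0 d / n + c_1 = 1$ with $d \geq 2$ and $c_0 / n \in \nn_0$ admits only $c_0 = 0, c_1 = 1$). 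In every case $\sum c_i = 1$, contradicting the hypothesis.

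The key delicacy is passing from $n \mid c_0 d^M$ (the immediate output of the mod-$n$ reduction) to $n \mid c_0$: this stronger divisibility is what makes $c_0 d / n$ a genuine nonnegative integer and places the reduced representation inside $M_r$. This is precisely where the coprimality $\gcd(n,d) = 1$ enters, through $d^M$ being a unit modulo $n$. Once this step is in hand the rest of the argument is mechanical bookkeeping.
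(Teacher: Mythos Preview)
Your proof is correct. The paper does not supply its own proof of this statement; it is quoted verbatim as \cite[Theorem~6.2]{GG2018}, so there is no in-paper argument to compare against. Your treatment of cases~(1) and~(2) is standard, and your inductive argument for case~(3) is sound: the key step---passing from $n \mid c_0 d^M$ to $n \mid c_0$ via $\gcd(n,d)=1$, so that $c_0 d/n$ is a legitimate nonnegative integer coefficient---is exactly the point where coprimality is needed, and you have identified it correctly. The back-substitution at the end works because an atom $r^{k-1}$ admits only the trivial representation with coefficient sum~$1$ (any representation with sum~$\ge 2$ would split it), and $r \neq 1$ forces the nonzero slot to be at index $k-1$.

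It is worth noting that the paper's own toolkit offers an alternative route to case~(3): Proposition~\ref{prop: sufficient condition for being an atom}, applied with $\mathcal{B} = \{r\}$, shows directly that every $r^k$ with $k \ge 1$ is an atom via the Rational Root Theorem (one clears denominators, reduces coefficients modulo $\mathsf{d}(r)$, and reads off a divisibility contradiction on the leading coefficient). That argument and yours are close cousins---both exploit the coprimality of $\mathsf{n}(r)$ and $\mathsf{d}(r)$ through a divisibility obstruction after clearing denominators---but yours proceeds by induction and a mod-$n$ reduction rather than invoking the Rational Root Theorem, making it slightly more self-contained.
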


	\subsection{Sets of lengths and their unions}
	
	The \emph{factorization monoid} of $M$, denoted by $\mathsf{Z}(M)$, is the free commutative monoid on $\mathcal{A}(M)$. The elements of $\mathsf{Z}(M)$ are called \emph{factorizations}, and if $z = a_1 + \cdots + a_n$ is an element of $\mathsf{Z}(M)$ for $a_1, \ldots, a_n \in\mathcal{A}(M)$ then we say that $|z|$, the \emph{length} of $z$, is $n$. The unique monoid homomorphism $\pi\colon\mathsf{Z}(M) \to M$ satisfying that $\pi(a) = a$ for all $a \in\mathcal{A}(M)$ is called the \emph{factorization homomorphism} of $M$. For all $x \in M$, there are two important sets associated to $x$\,:
	\[
		\mathsf{Z}_M(x) \coloneqq \pi^{-1}(x) \subseteq \mathsf{Z}(M) \hspace{0.6 cm}\text{ and } \hspace{0.6 cm}\mathsf{L}_M(x) \coloneqq \{|z| : z \in\mathsf{Z}_M(x)\},
	\]
	which are called the \emph{set of factorizations} of $x$ and the \emph{set of lengths} of $x$, respectively; we omit subscripts when there is no risk of ambiguity. In addition, the collection $\mathcal{L}(M) \coloneqq \{\mathsf{L}(x) \mid x \in M\}$ is called the \emph{system of sets of lengths} of $M$. See~\cite{AG2016} for a survey about sets of lengths and the role they play in factorization theory. For the rest of this section, we assume that $M$ is an atomic monoid. We say that $M$ satisfies the \emph{finite factorization property} provided that $\mathsf{Z}(x)$ is finite for all $x \in M$. In this case we call $M$ an \emph{FF-monoid}. Similarly, we say that $M$ satisfies the \emph{bounded factorization property} provided that $\mathsf{L}(x)$ is finite for all $x \in M$. In this case we call $M$ a \emph{BF-monoid}. It is well known that BF-monoids satisfy the ACCP (see~\cite[Corollary~1.3.3]{AGHK06}).
	
	We now introduce unions of sets of lengths and local elasticities. For a positive integer $n$, we denote by $\,\mathcal{U}_n(M)$ the set of positive integers $m$ for which there exist $a_1, \ldots, a_n, a'_1, \ldots, a'_m \in \mathcal{A}(M)$ such that $a _1 + \cdots + a_n = a'_1 + \cdots + a'_m$. We say that $\,\mathcal{U}_n(M)$ is the \emph{union of sets of lengths} of $M$ containing $n$. We also say that $\rho_n(M) \coloneqq \sup \mathcal{U}_n(M)$ is the \emph{nth local elasticity} of $M$. Unions of sets of lengths were introduced in~\cite{ChWWS1990} and further studied in~\cite{MFAG2008,GaGe09,ST2019}.
	
	A factorization invariant that is closely related to sets of lengths is the \emph{set of distances} or \emph{delta set}. For a nonzero element $x \in M$ we say that $d$ is a \emph{distance} of $x$ provided that $\mathsf{L}(x) \cap [l, l + d] = \{l, l + d\}$ for some $l \in \mathsf{L}(x)$. The \emph{set of distances of $x$}, denoted by $\Delta(x)$, is the set consisting of all the distances of $x$. In addition, the set	
	\[	
	\Delta(M) \coloneqq \bigcup_{x \in M, \, x \neq 0} \Delta(x)
	\]   
	is called the \emph{set of distances} of $M$. In general, determining the set of distances of a given monoid is no simple task, and just some specific calculations are known (see, for example,~\cite{BCKR06, CKDH2010,CK2016,GSLL2017,AGWS2017,GeQZ2019}).

	\section{Rational Multicyclic Monoids}
	
	In this section we initiate the study of the atomic properties of Puiseux monoids generated by multiple geometric sequences, but first we make a definition to avoid long descriptions.
	
	\begin{definition} \label{def: rational multicyclic monoid}
		Let $\mathcal{B}$ be a finite subset of $\qq_{>0}$ and set $M_{\mathcal{B}} \coloneqq \langle b^n \mid b \in \mathcal{B},\, n \in \mathbb{N}_0 \rangle$. We say that $M_{\mathcal{B}}$ is the \emph{rational multicyclic monoid over $\mathcal{B}$} provided that $\mathcal{B}$ is minimal, that is, if $\mathcal{B}' \subsetneq \mathcal{B}$ then $M_{\mathcal{B}'} \subsetneq M_{\mathcal{B}}$. 
	\end{definition}
			
	Given a finite subset $\mathcal{B}$ of the positive rational numbers, $M_{\mathcal{B}}$ is the rational multicyclic monoid over some subset $\mathcal{B'}$ of $\mathcal{B}$; we call the elements of $\mathcal{B'}$ \emph{primitive generators}. Clearly, $\nn_0$ is a rational multicyclic monoid. Furthermore, $\nn_0$ is a submonoid of $M$ for all rational multicyclic monoids $M$ with at least one primitive generator. Especial cases of rational multicyclic monoids have been studied before.
	
	\begin{example}
		Let $b \in\qq_{>0}$ such that $\mathsf{n}(b),\! \mathsf{d}(b) > 1$ and consider the rational multicyclic monoid $M_{\mathcal{B}}$ with $\mathcal{B} = \{b, b^{-1}\}$. In~\cite[Proposition 3.5]{GOTTI2018}, it was proved not only that $M_{\mathcal{B}}$ is atomic but also that $\text{Aut}(M_{\mathcal{B}}) \cong \zz$.
	\end{example}
	
	\begin{example}
		For a fixed $r \in\qq_{>0}$ consider the rational cyclic monoid $\langle r^n \mid n\in \nn_0\rangle$. These monoids were introduced by Gotti and Gotti in~\cite{GG2018} and deeper studied by Chapman et al. in~\cite{ScGG2019}. It is well known that rational cyclic monoids are atomic unless $r$ is a unit fraction, i.e., $r = n^{-1}$ for some $n \in \nn_{>1}$ (Theorem~\ref{theorem: cyclic rational semirings are atomic unless generated by unit fraction}). 
	\end{example}
	
	As we mentioned earlier, rational multicyclic monoids are a generalization of rational cyclic monoids. However, we have not shown yet that the class of rational multicyclic monoids properly includes that one comprising rational cyclic monoids. Before doing so (Example~\ref{ex: rational multicyclic monoid that is not a semiring} below) we will establish a few facts about atomic rational multicyclic monoids. 
	
	Note that if a rational multicyclic monoid $M_{\mathcal{B}}$ has a unit fraction $1/d$ as primitive generator then $M_{\mathcal{B}}$ is not atomic. Indeed, since $M_{\mathcal{B}}$ is minimally generated (in the sense of Definition~\ref{def: rational multicyclic monoid}) by some subset $\mathcal{B'} \subseteq \mathcal{B}$ containing $1/d$, not all elements of the form $1/d^n$ with $n \in \nn_0$ are generated (as elements of a monoid) by the nonnegative powers of the elements of $\mathcal{B'}\setminus\{1/d\}$. This, along with the fact that $(1/d)^n = d(1/d)^{n + 1}$ for all $n \in \nn_0$, implies that $M_{\mathcal{B}}$ is not atomic. For the rest of the paper we tacitly assume that primitive generators of rational multicyclic monoids are not unit fractions. 
	
	Our next goal is to prove that when checking whether a rational multicyclic monoid $M_{\mathcal{B}}$ is atomic there is no loss in assuming that the elements of $\mathcal{B}$ are proper fractions. 
	
	\begin{proposition} \label{prop: taking off well-ordered sets of values greater than 1 does not change the atomicity of a MC}
		Let $M_{\mathcal{B}}$ be a rational multicyclic monoid and let $\mathcal{B'} = \mathcal{B}_{<1}$. Then $M_{\mathcal{B}}$ is atomic if and only if $M_{\mathcal{B'}}$ is atomic.
	\end{proposition}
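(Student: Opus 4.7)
The plan is to reduce to the degenerate case and then exploit a tight correspondence between elements of $M_\mathcal{B}$ and $M_{\mathcal{B}'}$ that lie below $1$. Write $\mathcal{B}'' \coloneqq \mathcal{B} \setminus \mathcal{B}'$. If $\mathcal{B}' = \emptyset$, every primitive generator exceeds $1$, so every nonzero element of $M_\mathcal{B}$ is at least $1$; then lengths of factorizations are bounded by the value of the element, making $M_\mathcal{B}$ a BF-monoid and hence atomic. Since $M_{\mathcal{B}'} = \{0\}$ is trivially atomic, the equivalence holds in this case. Henceforth assume $\mathcal{B}' \neq \emptyset$.

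The key preliminary observation I will establish is that any $x \in M_\mathcal{B}$ with $x \leq 1$ already lies in $M_{\mathcal{B}'}$: in an expression of $x$ as a sum of generators $b^n$, each nonzero summand must be $\leq 1$, but the only generators of $M_\mathcal{B}$ that are $\leq 1$ are the element $1$ (which belongs to $M_{\mathcal{B}'}$ as $(b')^0$) and the powers $(b')^k$ with $b' \in \mathcal{B}'$ and $k \geq 1$. Combined with the fact that every atom of $M_{\mathcal{B}'}$ must be a single generator of $M_{\mathcal{B}'}$ and hence $\leq 1$, this yields $\mathcal{A}(M_\mathcal{B}) \cap [0,1] = \mathcal{A}(M_{\mathcal{B}'}) \cap [0,1]$ and $\mathcal{A}(M_{\mathcal{B}'}) \subseteq [0,1]$. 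The forward direction then follows immediately: given $M_\mathcal{B}$ atomic, each generator $(b')^n \leq 1$ of $M_{\mathcal{B}'}$ is a sum of atoms of $M_\mathcal{B}$, all of which are $\leq 1$ and hence are atoms of $M_{\mathcal{B}'}$, so $M_{\mathcal{B}'}$ is atomic.

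For the converse, assume $M_{\mathcal{B}'}$ atomic. Every $x \in M_\mathcal{B}$ decomposes as $x = \alpha + \beta$ with $\alpha \in M_{\mathcal{B}'}$ and $\beta$ in the submonoid $N \coloneqq \langle b^n : b \in \mathcal{B}'',\, n \geq 1 \rangle$ (obtained by routing each summand of an expression of $x$ to the appropriate piece and assigning the summand at $1$ to $\alpha$); by hypothesis $\alpha$ is a sum of atoms of $M_\mathcal{B}$, so it suffices to show every $\beta \in N$ is a sum of atoms of $M_\mathcal{B}$. I will prove this by strong induction on $\beta$ using the well-ordering of $N$, whose well-foundedness follows from $N \cap [0, K]$ being finite for every $K$ (only finitely many generators $b^n$ satisfy $b^n \leq K$ since $b > 1$, and every nonzero summand in $N$ exceeds $1$). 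For the inductive step, if $\beta \in \mathcal{A}(M_\mathcal{B})$ we are done; otherwise take any decomposition $\beta = x + y$ in $M_\mathcal{B}$ with $x, y > 0$ and split, giving $\beta = \alpha' + \beta'$ with $\alpha' \in M_{\mathcal{B}'}$ and $\beta' \in N$. If $\beta' < \beta$, both $\alpha'$ and $\beta'$ are sums of atoms of $M_\mathcal{B}$ (by $M_{\mathcal{B}'}$-atomicity and the inductive hypothesis), while if $\beta' = \beta$, then $\alpha' = 0$ forces $x, y \in N$ with $x, y < \beta$, and the inductive hypothesis applies to both. The main technical points I expect to require care are verifying the well-ordering of $N$ and checking that the bipartition $x = \alpha_x + \beta_x$ works consistently through the induction.
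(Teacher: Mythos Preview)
Your argument is correct and rests on the same key insight as the paper's: the submonoid generated by the powers $b^n$ with $b>1$ and $n\ge 1$ is well-ordered, and this well-foundedness is what makes the converse direction go through. The forward direction and the identification $\mathcal{A}(M_{\mathcal{B}'}) = \mathcal{A}(M_{\mathcal{B}})_{\le 1}$ are handled identically in both.

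The packaging of the converse differs. The paper argues by contradiction: starting from an element $y_0 \notin \langle \mathcal{A}(M_{\mathcal{B}})\rangle$, it extracts a strictly smaller $y_1 \in M_{\mathcal{B}\setminus\mathcal{B}'}$ still outside $\langle \mathcal{A}(M_{\mathcal{B}})\rangle$, iterates, and obtains an infinite descending chain in $M_{\mathcal{B}\setminus\mathcal{B}'}$, contradicting a cited well-ordering result. You instead decompose $x = \alpha + \beta$ with $\alpha \in M_{\mathcal{B}'}$ and $\beta \in N$, then run a direct well-founded induction on $\beta$, proving the needed finiteness of $N\cap[0,K]$ yourself. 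Your route is more self-contained (no external citation for the well-ordering) and makes the decomposition explicit; the paper's is slightly terser. One small point worth stating explicitly in your write-up: the split $\beta = \alpha' + \beta'$ depends on a \emph{choice} of expressions for $x$ and $y$, and the conclusion ``$\alpha'=0$ forces $x,y\in N$'' holds because, for those chosen expressions, every summand landed in $N$.
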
 
	
	\begin{proof}
		If $\mathcal{B} = \emptyset$ then our argument follows readily. Moreover, we can assume that $\mathcal{B}' \neq\emptyset$ by \cite[Proposition 4.5]{GOTTI19}. Since $M_{\mathcal{B}}$ is reduced, $\mathcal{A}(M_{\mathcal{B}'}) \supseteq \mathcal{A}(M_{\mathcal{B}}) \cap M_{\mathcal{B}'}$. This implies that $\mathcal{A}(M_{\mathcal{B}'}) = \mathcal{A}(M_{\mathcal{B}})_{\leq 1}$. Now suppose that $M_{\mathcal{B}}$ is atomic. For $b \in \mathcal{B}'$ and $n \in \nn_0$ we can write $b^n$ as the sum of elements of the set $\mathcal{A}(M_{\mathcal{B}})_{\leq 1}$. Hence $b^n\in\langle\mathcal{A}(M_{\mathcal{B}'})\rangle$ which, in turn, implies that $M_{\mathcal{B'}}$ is atomic.
		
		Conversely, suppose that $M_{\mathcal{B}'}$ is atomic. Let $y_0$ be a nonzero element of $M_{\mathcal{B}}$ and suppose by contradiction that $y_0\not\in\langle\mathcal{A}(M_{\mathcal{B}})\rangle$. Consequently, $y_0 = x_1 + x_2$ with $x_1, x_2$ nonzero elements of $M_{\mathcal{B}}$. Assume without loss that $x_1 \not\in\langle\mathcal{A}(M_{\mathcal{B}})\rangle$. Since the inclusion $\mathcal{A}(M_{\mathcal{B'}}) \subseteq \mathcal{A}(M_{\mathcal{B}})$ holds, we have that $x_1\not\in\langle\mathcal{A}(M_{\mathcal{B}'})\rangle$. Because the submonoid $M_{\mathcal{B'}}$ is atomic, there exists $y_1 \in M_{\mathcal{B}\setminus\mathcal{B}'}$ satisfying that $y_1 \,|_{M_{\mathcal{B}}} \,x_1$ and $y_1 \not\in \langle\mathcal{A}(M_{\mathcal{B}})\rangle$. Note that $y_0 > y_1$. Repeating the same reasoning for $y_1$, which is not an element of $\langle\mathcal{A}(M_{\mathcal{B}})\rangle$ (as it was the case for $y_0$), we obtain an element $y_2 \in M_{\mathcal{B}\setminus\mathcal{B}'}$ such that $y_2\not\in\langle\mathcal{A}(M_{\mathcal{B}})\rangle$ and $y_1 > y_2$. Using an inductive argument, it is not hard to show that there exists a strictly decreasing sequence $y_0 > y_1 > y_2 > \cdots$ of elements of $M_{\mathcal{B}\setminus\mathcal{B'}}$, but this contradicts \cite[Theorem~3.9]{GG2018}. Hence $y_0 \in\langle \mathcal{A}(M_{\mathcal{B}}) \rangle$, which concludes our proof.
	\end{proof}

	The following proposition plays a key role in this manuscript as it provides several examples of atomic rational multicyclic monoids with conspicuous sets of atoms. Consequently, we will turn to this proposition to construct a rational multicyclic monoid that is not (unlike rational cyclic monoids) a semiring, to realize arithmetic progressions as the sets of distances of certain Puiseux monoids, and to distinguish a family of rational multicyclic monoids whose sets of lengths are well structured.
	
	\begin{proposition} \label{prop: sufficient condition for being an atom}
		Set $M \coloneqq \langle b^n \mid b \in\mathcal{B},\, n \in\nn_0 \rangle$ with $\mathcal{B}$ a (not necessarily finite) subset of $\qq_{>0}\setminus\nn$ satisfying that $\mathsf{n}(b) \neq 1$ for all $b \in \mathcal{B}$. If $b_0$ is an element of $\mathcal{B}$ such that $\gcd(\mathsf{d}(b_0), \mathsf{d}(b)) = 1$ for each $b \in \mathcal{B}\setminus \{b_0\}$ then $b_0^n$ is an atom of $M$ for all $n \in \nn$. 
	\end{proposition}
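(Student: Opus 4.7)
The plan is to argue by contradiction, separating any representation of $b_0^n$ as a sum of generators into its $b_0$-part and its non-$b_0$-part and exploiting the coprimality of denominators. Suppose $b_0^n\notin\mathcal{A}(M)$, so $b_0^n = y_1 + y_2$ with $y_1,y_2\in M\setminus\{0\}$. Expanding $y_1,y_2$ in the generators and regrouping, I obtain $b_0^n = Y + Z$, where $Y = \sum_{k\ge 0} c_k b_0^k \in M_{b_0}$, $Z = \sum_{b\in\mathcal{B}\setminus\{b_0\}}\sum_{k\ge 0} c_{b,k} b^k$, and the total number of generators used is at least two. The hypothesis $\gcd(\mathsf{d}(b_0),\mathsf{d}(b)) = 1$ for every $b\in\mathcal{B}\setminus\{b_0\}$ guarantees that the denominator of $Z$ in lowest terms is coprime to $\mathsf{d}(b_0)$. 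If $Y = 0$, then $b_0^n = Z$ is incompatible with denominators (the left side has denominator $\mathsf{d}(b_0)^n > 1$ while the right is coprime to $\mathsf{d}(b_0)$); if $Z = 0$, then $b_0^n = Y$ is a nontrivial representation inside the rational cyclic monoid $M_{b_0}$, contradicting Theorem~\ref{theorem: cyclic rational semirings are atomic unless generated by unit fraction} (which applies since $\mathsf{n}(b_0),\mathsf{d}(b_0) > 1$).

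The heart of the argument is the case $Y,Z > 0$. I would write $Z = A/B$ in lowest terms, so $\gcd(B,\mathsf{d}(b_0)) = 1$, and combine with $Y = b_0^n - Z$. A short $\gcd$ check using $\gcd(\mathsf{n}(b_0),\mathsf{d}(b_0)) = \gcd(A,B) = \gcd(B,\mathsf{d}(b_0)) = 1$ shows that
\[
Y \,=\, \frac{\mathsf{n}(b_0)^n B - A\,\mathsf{d}(b_0)^n}{\mathsf{d}(b_0)^n B}
\]
is already in lowest terms, so the reduced denominator of $Y$ equals $\mathsf{d}(b_0)^n B$. But $Y\in M_{b_0}$ forces the reduced denominator of $Y$ to be a divisor of a power of $\mathsf{d}(b_0)$; combined with $\gcd(B,\mathsf{d}(b_0)) = 1$, this squeezes $B = 1$, and hence $Z$ is a positive integer.

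To finish I split on whether $b_0 < 1$ or $b_0 > 1$. If $b_0 < 1$, then $Z \le b_0^n < 1$ contradicts $Z \ge 1$. If $b_0 > 1$, monotonicity of $k\mapsto b_0^k$ together with $Y < b_0^n$ forces $c_k = 0$ for every $k > n$, and clearing denominators in $Y = \sum_{k\le n} c_k b_0^k$ yields the integer identity
\[
D \,:=\, Y\,\mathsf{d}(b_0)^n \,=\, \sum_{k=0}^{n} c_k\,\mathsf{n}(b_0)^k\,\mathsf{d}(b_0)^{n-k},
\]
with $\gcd(D,\mathsf{d}(b_0)) = 1$ from the previous paragraph. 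Reducing modulo $\mathsf{d}(b_0)$ isolates $c_n\mathsf{n}(b_0)^n$, and combining this with $\gcd(\mathsf{n}(b_0),\mathsf{d}(b_0)) = 1$ forces $c_n \ge 1$; thus $D \ge \mathsf{n}(b_0)^n$, contradicting $D = \mathsf{n}(b_0)^n - Z\,\mathsf{d}(b_0)^n < \mathsf{n}(b_0)^n$. The main obstacle is the fraction computation pinning down the reduced denominator of $Y$ as exactly $\mathsf{d}(b_0)^n B$, which is where the coprimality hypothesis on the $\mathsf{d}(b)$'s is genuinely used; the remainder is an arithmetic squeeze between $D \ge \mathsf{n}(b_0)^n$ and $D < \mathsf{n}(b_0)^n$.
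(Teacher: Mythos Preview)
Your proof is correct and takes a genuinely different route from the paper's. The paper rewrites the hypothetical decomposition as a polynomial identity in $b_0$ (after clearing the foreign denominators by $N = \prod \mathsf{d}(b_j)^{e_j}$), invokes \cite[Lemma~3.1]{ScGG2019} to reduce the $b_0$-coefficients below $\mathsf{d}(b_0)$, and then applies the Rational Root Theorem to force $\mathsf{d}(b_0)$ to divide the leading coefficient, yielding the contradiction in each of the two cases $b_0<1$ and $b_0>1$. You instead isolate $Z$ (the non-$b_0$ part), compute the reduced denominator of $Y=b_0^n-Z$ exactly via a $\gcd$ check, and conclude $B=1$, i.e.\ $Z\in\nn$; this turns the case $b_0<1$ into the one-line squeeze $1\le Z<b_0^n<1$, and reduces the case $b_0>1$ to a short congruence argument modulo $\mathsf{d}(b_0)$.

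What each buys: the paper's approach packages the divisibility into a single appeal to the Rational Root Theorem, but at the cost of an external lemma and a slightly delicate reduction of coefficients in the $b_0<1$ case. Your approach is self-contained (beyond Theorem~\ref{theorem: cyclic rational semirings are atomic unless generated by unit fraction}, which handles the $Z=0$ sub-case) and makes the $b_0<1$ case essentially free; the only real work is the verification that $(\mathsf{n}(b_0)^nB - A\,\mathsf{d}(b_0)^n)/(\mathsf{d}(b_0)^nB)$ is already in lowest terms. One minor cosmetic point: in the $b_0>1$ case you could stop earlier, since $c_n\ge 1$ already gives $Y\ge b_0^n$, contradicting $Y<b_0^n$; the detour through $D\ge \mathsf{n}(b_0)^n$ versus $D<\mathsf{n}(b_0)^n$ is correct but unnecessary. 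Also, when you split the generators into $Y$ and $Z$, note that $b^0=1=b_0^0$ for every $b$, so the constant term should be assigned once (to $Y$, say); this is harmless for the denominator argument but worth stating to avoid ambiguity.
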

	
	\begin{proof}
		Suppose, by contradiction, that $b_0^n$ is not an atom of $M$ for some $n \in \nn$. Thus,
		\begin{equation}\label{eq: atom a^k}
		b_0^n = c_1b_0^{m + n} + \cdots + c_{m + n}b_0 + c_{m + n + 1}b_1^{e_1} + \cdots + c_{m + n + k}b_k^{e_k}
		\end{equation}
		for coefficients $c_1, \ldots, c_{m + n + k} \in \nn_0$, exponents $e_1, \ldots, e_k \in \nn$ and elements $b_1, \ldots, b_k \in \mathcal{B}\setminus\{b_0\}$. Given that $\mathsf{d}(b_0)$ and $\mathsf{d}(b)$ are relatively prime numbers for all $b \in\mathcal{B}$ with $b \neq b_0$, the inequality $c_1 + \cdots + c_{m + n} > 0$ holds. First, we analyze the case where $b_0 < 1$. Under this assumption we have $c_{m + 1} = \cdots = c_{m + n} = 0$. So we can assume that $m \geq 1$ and $c_1 > 0$. In virtue of \cite[Lemma~3.1]{ScGG2019}, there is no loss in assuming that $c_l < \mathsf{d}(b_0)$ for each $l \in \llbracket 1, m + n \rrbracket$. After multiplying Equation~\ref{eq: atom a^k} by $N \coloneqq \mathsf{d}(b_1)^{e_1} \cdots \mathsf{d}(b_k)^{e_k}$ it is easy to see that $b_0$ is a rational root of the polynomial $c_1N x^{m + n} + \cdots - Nx^n + K$ for some $K \in \nn_0$. Then $\mathsf{d}(b_0) \mid c_1$ by the Rational Root Theorem stating that if $q \in \qq$ is a root of a polynomial $p$ in one variable with integer coefficients then $\mathsf{n}(q)$ and $\mathsf{d}(q)$ divide the constant term of $p$ and the leading coefficient of $p$, respectively. This contradiction concludes the proof for the case where $b_0 < 1$. We proceed in a similar fashion for the case $b_0 > 1$: in Equation~\ref{eq: atom a^k} we have $c_ 1 = \cdots = c_{m + 1} = 0$, which implies that $b_0$ is a rational root of the polynomial $-Nx^n + \cdots + K$. Again, applying the Rational Root Theorem we obtain that $\mathsf{d}(b_0) \mid N$, which is a contradiction. Therefore, our result follows.
	\end{proof}
	
	The examples of rational multicyclic monoids that we have seen so far are multiplicatively closed. However, this is not always the case as the next example illustrates. 
	
	\begin{example} \label{ex: rational multicyclic monoid that is not a semiring}
		Let $p_1$ and $p_2$ be two prime numbers such that $2 < p_1 < p_2$. Consider the rational multicyclic monoid $M_{\mathcal{B}}$ with $\mathcal{B} = \{p_1/p_2,\, (p_2 - p_1)/p_1\}$. Note that if a rational multicyclic monoid $M$ is multiplicatively closed then $\mathcal{A}(M) = \emptyset$ if and only if $1 \not\in\mathcal{A}(M)$. We have
		\[
			1 = \frac{p_1}{p_2} \left(\frac{p_2 - p_1}{p_1}\right) + \frac{p_1}{p_2}.
		\]
		This, along with the fact that $\mathcal{A}( M_{\mathcal{B}}) \neq \emptyset$ by Proposition~\ref{prop: sufficient condition for being an atom}, implies that $M_{\mathcal{B}}$ is not multiplicatively closed.
	\end{example}

\noindent Example~\ref{ex: rational multicyclic monoid that is not a semiring} shows that the family of rational multicyclic monoids properly includes that one comprising rational cyclic monoids.

If we fix $n \in \nn$ then we can find infinitely many atomic rational multicyclic monoids with exactly $n$ proper fractions as primitive generators by Proposition~\ref{prop: sufficient condition for being an atom}. Next, we prove an equivalent result for non-atomic rational multicyclic monoids, which gives evidence of the complexity of classifying atomic rational multicyclic monoids.

%\begin{lemma} \label{lemma: strongly increasing PM does not contain infinite decreasing sequence}
%	Let $(\alpha_n)$ be an unbounded increasing sequence of nonnegative rational numbers, and let $M = \langle \alpha_n \mid n\in\nn \rangle$. Then $M$ contains no strictly decreasing sequence of elements.
%\end{lemma}
%
%\begin{proof}
%	In virtue of~\cite[Theorem 6.3]{GOTTI19}, $M$ is atomic, and there is no loss in assuming that $\mathcal{A}(M) = \{\alpha_n \mid n \in \mathbb{N}\}$. Now let 
%	\[
%	S = \{s \in\mathbb{R}_{\geq0} \mid (\gamma_n)\to s,\, (\gamma_n) \text{ is a strictly decreasing sequence of elements of }M\}.            
%	\]
%	Clearly, $0\not\in S$. Assume, by contradiction, that $S \neq \emptyset$. Note that $S$ has a minimal element since $\inf(S) \in S$. Let $(\gamma_n)$ be a strictly decreasing sequence of elements of $M$ that converges to $\inf(S)$. Note that for any $m \in\nn$ there exists $K_m \in\nn$ such that $\alpha_m \nmid_M \gamma_n$ for $n > K_m$; otherwise, $(\inf(S) - \alpha_m) \in S$, which contradicts the minimality of $\inf(S)$. Consequently, there are arbitrarily large atoms of $M$ dividing some $\gamma_n$, but this is impossible as $(\gamma_n)$ is a bounded sequence. Therefore, $S = \emptyset$.
%\end{proof}

\begin{theorem} \label{theorem: atomic description of trivial GMCs}
	Let $M_{\mathcal{B}}$ be a rational multicyclic monoid and $n$ a nonnegative integer. The following statements hold:
		\begin{enumerate}
			\item $M_{\mathcal{B}}$ is hereditarily atomic if and only if $\mathsf{n}(b) \geq \mathsf{d}(b)$ for all $b \in \mathcal{B}$;\vspace{3 pt}
			\item there are infinitely many non-atomic and non-isomorphic rational multicyclic monoids with exactly $n$ proper fractions as primitive generators if and only if $n \geq 2$.
		\end{enumerate} 
	\end{theorem}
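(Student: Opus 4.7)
For part~(1), the direction $(\Leftarrow)$ is the routine side. If $\mathsf{n}(b) \geq \mathsf{d}(b)$ for every $b \in \mathcal{B}$, then every primitive generator satisfies $b \geq 1$, so each $b^n$ lies in $[1, \infty)$, and every nonzero element of $M_{\mathcal{B}}$ is bounded below by $1$. This lower bound is inherited by every submonoid $N \subseteq M_{\mathcal{B}}$, so $0$ is not a limit point of $N$; the ACCP criterion of \cite[Theorem~3.9]{GG2018} (the tool already invoked within Proposition~\ref{prop: taking off well-ordered sets of values greater than 1 does not change the atomicity of a MC}) then forces $N$ to satisfy the ACCP and hence to be atomic, so $M_{\mathcal{B}}$ is hereditarily atomic.

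The direction $(\Rightarrow)$ I would prove by contrapositive. Given $b_0 \in \mathcal{B}$ with $b_0 < 1$, so that $b_0^n \to 0$ and hence $0$ is a limit point of $M_{\mathcal{B}}$, the goal is to exhibit a submonoid of $M_{\mathcal{B}}$ that is not atomic. The plan is to use the standing relation $\mathsf{d}(b_0)\, b_0^{n+1} = \mathsf{n}(b_0)\, b_0^n$ together with the non-unit-fraction assumption $\mathsf{n}(b_0) \geq 2$ to build a sequence $\{c_n\}_{n \geq 0} \subseteq M_{\mathcal{B}}$ with $c_n \to 0$ and Grams-type relations $c_n = k_n c_{n+1} + d_n$ for integers $k_n \geq 2$ and $d_n \in M_{\mathcal{B}}$; the submonoid generated by the $c_n$'s and $d_n$'s will then contain $c_0$ without any finite factorization into atoms. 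The main obstacle is keeping the chosen sequence entirely inside $M_{\mathcal{B}}$, since natural candidates such as $b_0^n - b_0^{n+1}$ fail to lie in $M_{\mathcal{B}}$ in general; the elements have to be engineered from positive combinations of powers of primitive generators, which is where the assumption $\mathsf{n}(b_0)\geq 2$ is essential.

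Part~(2) splits into two halves. For ``$n \leq 1 \Rightarrow$ no non-atomic example,'' I apply Proposition~\ref{prop: taking off well-ordered sets of values greater than 1 does not change the atomicity of a MC} to reduce atomicity of $M_{\mathcal{B}}$ to atomicity of $M_{\mathcal{B}_{<1}}$: the case $n = 0$ gives the trivial monoid, and the case $n = 1$ gives a rational cyclic monoid over a non-unit proper fraction, which is atomic by Theorem~\ref{theorem: cyclic rational semirings are atomic unless generated by unit fraction}(3). For ``$n \geq 2 \Rightarrow$ infinitely many non-atomic non-isomorphic examples,'' I plan to produce a parametric family $\{M_{\mathcal{B}_r}\}_r$, indexed over an infinite parameter set (for instance, primes $r$ outside a finite exceptional set), with each $\mathcal{B}_r$ consisting of $n$ proper fractions whose denominators share common prime factors in a controlled way so that $M_{\mathcal{B}_r}$ contains an element with no finite factorization into atoms. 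Distinct parameters should yield pairwise non-isomorphic monoids, distinguished for instance by the set of primes appearing in the denominators of elements of $M_{\mathcal{B}_r}$. Simultaneously verifying non-atomicity and non-isomorphism for the entire family is the principal obstacle here.
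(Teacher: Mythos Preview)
Your handling of the easy directions---$(\Leftarrow)$ of~(1) and $(\Rightarrow)$ of~(2)---matches the paper's argument essentially verbatim.

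For the two hard directions, however, you have outlined intentions without supplying the constructions, and in each case the missing piece is the entire substance of the argument.

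\textbf{Part~(1), $(\Rightarrow)$.} Your Grams-type plan stops at ``build a sequence $c_n$ with $c_n = k_n c_{n+1} + d_n$''; but merely exhibiting such relations does not prove the resulting submonoid is non-atomic. You still owe the verification that $c_0$ cannot be written as a finite sum of atoms of that submonoid, and nothing in your sketch addresses this. The paper takes a concrete route you may not have anticipated: it first passes to an atom $q$ of $M_{\mathcal{B}}$ with $1 < \mathsf{n}(q) < \mathsf{d}(q)/2$ (such $q$ exists among the powers of any proper-fraction generator), then sets $M' = \langle \mathsf{d}(q)q^n,\ (\mathsf{d}(q)-\mathsf{n}(q))q^n \mid n \in \nn\rangle$. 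The identity $\mathsf{d}(q)q^n = \mathsf{d}(q)q^{n+1} + (\mathsf{d}(q)-\mathsf{n}(q))q^n$ kills every $\mathsf{d}(q)q^n$ as a possible atom, so $\mathcal{A}(M') \subseteq \{(\mathsf{d}(q)-\mathsf{n}(q))q^n\}$. The decisive step---absent from your sketch---is a $p$-adic argument: clearing denominators in any putative factorization $\mathsf{d}(q)q^n = \sum c_i(\mathsf{d}(q)-\mathsf{n}(q))q^{m_i}$ and reducing modulo a prime divisor $p$ of $\mathsf{d}(q)-\mathsf{n}(q)$ forces $p \mid \mathsf{n}(q)$ or $p \mid \mathsf{d}(q)$, a contradiction. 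The condition $\mathsf{n}(q) < \mathsf{d}(q)/2$ is what guarantees $\mathsf{d}(q)-\mathsf{n}(q) > 1$, so that such a prime exists, and also controls the range of exponents $m_i$.

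\textbf{Part~(2), $(\Leftarrow)$.} ``A parametric family with denominators sharing common prime factors in a controlled way'' is not yet a construction. The paper's family is explicit: choose primes $p_0 < p_1$ with $p_0 p_1 + 1 < p_2 < \cdots < p_n$, and take $\mathcal{B} = \{p_0/p_2,\ p_1/p_2,\ p_0p_1/p_3,\ \ldots,\ p_0p_1/p_n\}$. Minimality of $\mathcal{B}$ uses Proposition~\ref{prop: sufficient condition for being an atom} for the last $n-2$ generators and a direct numerator-divisibility check for the first two. Non-atomicity comes from a Frobenius-type argument on the two generators sharing denominator $p_2$: for each $m$ one finds $N > m$ with $p_0^m p_2^{N-m} > p_0^N p_1^N - p_0^N - p_1^N$, hence $(p_0/p_2)^m = \alpha(p_0/p_2)^N + \beta(p_1/p_2)^N$ for some $\alpha,\beta > 0$, so $(p_0/p_2)^m$ is never an atom. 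Pairwise non-isomorphism follows from \cite[Proposition~3.2]{GOTTI2018}. None of these three ingredients---minimality, the Frobenius trick, the isomorphism invariant---appears in your outline.
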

		
	\begin{proof}
		For the reverse implication of $(1)$ it is not hard to see that $0$ is not a limit point of the nonzero elements of $M_{\mathcal{B}}$, which implies that $M_{\mathcal{B}}$ is a BF-monoid by \cite[Proposition~4.5]{GOTTI19}. Hence $M_{\mathcal{B}}$ satisfies the ACCP, and the reverse implication follows. As for the direct implication, we know that $M_{\mathcal{B}}$ is hereditarily atomic and, \emph{a fortiori}, atomic. If $\mathcal{B} = \emptyset$ then our argument follows immediately. Now assume, by way of contradiction, that $\mathcal{B}$ contains a proper fraction. Then there exists $q \in \mathcal{A}(M_{\mathcal{B}})$ such that $1 < \mathsf{n}(q) < \mathsf{d}(q)/2$. For each $n \in \nn$, we have
		\begin{equation} \label{eq: equation inducing ACCP property}
		\mathsf{d}(q)q^n \!= \mathsf{d}(q)q^{n + 1}\!+ (\mathsf{d}(q) - \mathsf{n}(q))q^n\!
		\end{equation}
		as the reader can check. Let $M' = \langle \mathsf{d}(q)q^n, \,(\mathsf{d}(q) - \mathsf{n}(q))q^n \mid n \in \nn \rangle$. Clearly, $M'$ is a submonoid of $M_{\mathcal{B}}$, which implies that $M'$ is atomic. In virtue of Equation~\ref{eq: equation inducing ACCP property}, $\mathsf{d}(q)q^n \not\in\mathcal{A}(M')$ for any $n \in\nn$. Consequently, $\mathcal{A}(M') \subseteq \{(\mathsf{d}(q) - \mathsf{n}(q))q^n \mid n \in \nn\}$. Fix $n \in \nn_{\geq 2}$. As $M'$ is atomic, we have
		\begin{equation} \label{eq: original equation}
		\mathsf{d}(q)q^n = \sum_{i = 1}^{k} c_i (\mathsf{d}(q) - \mathsf{n}(q))q^{m_i}
		\end{equation} 	
		for some index $k \in\nn_{>1}$, coefficients $c_1, \ldots, c_k \in \nn$ and exponents $m_1, \ldots, m_k \in \nn$. Without loss of generality, we can assume that $m_1 < \cdots < m_k$. Since the inequality $2\cdot\mathsf{n}(q) < \mathsf{d}(q)$ holds, it is not hard to check that $\mathsf{d}(q)q^n < (\mathsf{d}(q) - \mathsf{n}(q))q^{n - 2}$, which implies that $m_1 \geq n - 1$. After multiplying both sides of Equation~\ref{eq: original equation} by $\mathsf{d}(q)^{m_k}$ we obtain
		\begin{equation} \label{eq: original equation 2}
		\mathsf{n}(q)^n\mathsf{d}(q)^{m_k - n + 1} = \sum_{i = 1}^{k} c_i (\mathsf{d}(q) - \mathsf{n}(q))\mathsf{n}(q)^{m_i}\mathsf{d}(q)^{m_k - m_i}\!,
		\end{equation}
		where both sides of Equation~\ref{eq: original equation 2} represent integers since $n - 1 \leq m_1 < \cdots < m_k$. Now take $p \in\mathbb{P}$ such that $p \,| \,\mathsf{d}(q) - \mathsf{n}(q)$. Note that such a prime $p$ must exist given that the inequalities $1 < \mathsf{n}(q) < \mathsf{d}(q)/2$ hold. Since Equation~\ref{eq: original equation 2} also holds, either $p \,| \,\mathsf{n}(q)$ or $p \,| \,\mathsf{d}(q)$. This contradiction proves that our hypothesis is untenable. Therefore, $\mathcal{B}$ contains no proper fraction, and $(1)$ follows.
		
		The direct implication of $(2)$ follows after $(1)$, Proposition~\ref{prop: taking off well-ordered sets of values greater than 1 does not change the atomicity of a MC}, and Theorem~\ref{theorem: cyclic rational semirings are atomic unless generated by unit fraction} (recall we assume that primitive generators of rational multicyclic monoids are not unit fractions). As for the reverse implication, let $p_0, \ldots, p_n$ be a finite sequence of prime numbers such that $p_0 < p_1 < p_0\cdot p_1 + 1 < p_2 < \cdots < p_n$; it is easy to see that there are infinitely many such sequences. Let 
		\[
			\mathcal{B} = \left\{\frac{p_0}{p_2}, \frac{p_1}{p_2}, \frac{p_0p_1}{p_3}, \ldots, \frac{p_0p_1}{p_n}\right\}
		\]
	if $n \geq 3$; otherwise, let $\mathcal{B} = \{p_0/p_2, p_1/p_2\}$. Now consider the rational multicyclic monoid $M_{\mathcal{B}}$. We shall prove that $\mathcal{B}$ minimally generates $M_{\mathcal{B}}$ (in the sense of Definition~\ref{def: rational multicyclic monoid}). Note that if
	\[
		M' = \left\langle b^n \mid b \in \mathcal{B}\setminus\{p_0/p_2\},\, n \in \nn_0 \right\rangle
	\]
	then $p_0/p_2 \not\in M'$ since the numerators of the elements of $\mathcal{B}\setminus \{p_0/p_2\}$ are divisible by $p_1$. This means that if a subset $\mathcal{B'} \subseteq \mathcal{B}$ does not contain $p_0/p_2$ then $M_{\mathcal{B'}}$ is a proper submonoid of $M_{\mathcal{B}}$. The same reasoning applies, \emph{mutatis mutandis}, to the element $p_1/p_2$. As for the rest of the elements of $\mathcal{B}$ (if there is any), they are atoms of $M_{\mathcal{B}}$ by Proposition~\ref{prop: sufficient condition for being an atom}. Hence if $\mathcal{B'}$ is a proper subset of $\mathcal{B}$ then $M_{\mathcal{B'}}$ is a proper submonoid of $M_{\mathcal{B}}$. In other words, $M_{\mathcal{B}}$ is the rational multicyclic monoid over $\mathcal{B}$. Moreover, $(p_0/p_2)^m \not\in\mathcal{A}(M_{\mathcal{B}})$ for any $m \in\nn$. Indeed, for a fixed $m \in\nn$, there exists $N \in\nn_{>m}$ such that the inequality $(p_2/p_0)^m < (p_2/p_0p_1)^N$ holds. Thus, 
		\[
		p_0^Np_1^N - p_0^N - p_1^N < (p_0p_1)^N < p_0^mp_2^{N - m}\!.
		\]
		Then there exist $\alpha, \beta \in \nn_0$ such that $\alpha p_0^N + \beta p_1^N = p_0^mp_2^{N - m}$ which implies that $\alpha, \!\beta \neq 0$ and $\alpha(p_0/p_2)^N + \beta(p_1/p_2)^N = (p_0/p_2)^m$.  Hence $M_{\mathcal{B}}$ is not atomic, and the proof follows after~\cite[Proposition 3.2]{GOTTI2018}. 
	\end{proof}
	
	\begin{cor} \label{cor: characterization of multicyclic monoid satisfying the ACCP condition}
		Let $M_{\mathcal{B}}$ be a rational multicyclic monoid. If $M_{\mathcal{B}}$ satisfies the ACCP then $\mathsf{n}(b) \geq \mathsf{d}(b)$ for all $b \in \mathcal{B}$.
	\end{cor}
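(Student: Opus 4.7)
The plan is to prove the contrapositive: assume some $b \in \mathcal{B}$ violates $\mathsf{n}(b) \geq \mathsf{d}(b)$ and exhibit an explicit infinite strictly ascending chain of principal ideals of $M_{\mathcal{B}}$, thereby contradicting ACCP. The chain will be built from the telescoping identity already isolated in the proof of Theorem~\ref{theorem: atomic description of trivial GMCs}(1); the only work is to transport that identity from the auxiliary submonoid $M'$ used there back to the ambient monoid $M_{\mathcal{B}}$.

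Concretely, I would fix $b \in \mathcal{B}$ with $\mathsf{n}(b) < \mathsf{d}(b)$, which forces $0 < b < 1$ and makes $\mathsf{d}(b)-\mathsf{n}(b)$ a positive integer, and then define $x_k \coloneqq \mathsf{d}(b)\,b^k$ for each $k \in \nn$. The identity
\[
\mathsf{d}(b)\,b^k \;=\; \mathsf{d}(b)\,b^{k+1} + \bigl(\mathsf{d}(b)-\mathsf{n}(b)\bigr) b^k
\]
(essentially Equation~\ref{eq: equation inducing ACCP property}) immediately yields $x_{k+1} \,|_{M_{\mathcal{B}}}\, x_k$: both $x_k$ and the cofactor $(\mathsf{d}(b)-\mathsf{n}(b))b^k$ lie in $M_{\mathcal{B}}$ because $b^k$ is itself a generator and $\mathsf{d}(b), \mathsf{d}(b)-\mathsf{n}(b) \in \nn$. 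Hence $x_k + M_{\mathcal{B}} \subseteq x_{k+1} + M_{\mathcal{B}}$ for every $k \in \nn$. Strictness is free: since $0 < b < 1$ one has $x_{k+1} < x_k$, so $x_{k+1}-x_k < 0$ cannot belong to $M_{\mathcal{B}} \subseteq \qq_{\geq 0}$. The resulting infinite strictly ascending chain $x_1 + M_{\mathcal{B}} \subsetneq x_2 + M_{\mathcal{B}} \subsetneq \cdots$ contradicts the ACCP hypothesis and completes the contrapositive.

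I do not foresee any genuine obstacle: the whole argument is driven by the telescoping identity, and the only things left to verify are the membership of $(\mathsf{d}(b)-\mathsf{n}(b))b^k$ in $M_{\mathcal{B}}$ and the sign of $x_{k+1}-x_k$, both immediate. As a side remark, pairing this argument with the reverse implication of Theorem~\ref{theorem: atomic description of trivial GMCs}(1) in fact upgrades the corollary to the biconditional chain that $M_{\mathcal{B}}$ satisfies the ACCP if and only if $M_{\mathcal{B}}$ is a BF-monoid if and only if $M_{\mathcal{B}}$ is hereditarily atomic if and only if $\mathsf{n}(b) \geq \mathsf{d}(b)$ for all $b \in \mathcal{B}$, all captured by a single arithmetic condition on the primitive generators.
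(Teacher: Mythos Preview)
Your argument is correct. The telescoping identity $\mathsf{d}(b)\,b^k=\mathsf{d}(b)\,b^{k+1}+(\mathsf{d}(b)-\mathsf{n}(b))\,b^k$ indeed gives $x_{k+1}\,|_{M_{\mathcal B}}\,x_k$ with nonzero cofactor whenever $\mathsf{n}(b)<\mathsf{d}(b)$, and since $M_{\mathcal B}$ is reduced the inclusions $x_k+M_{\mathcal B}\subsetneq x_{k+1}+M_{\mathcal B}$ are strict; the side remark about the four-way equivalence is also accurate.

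The paper, by contrast, does not write out a proof: the statement is placed as an immediate corollary of Theorem~\ref{theorem: atomic description of trivial GMCs}(1), the intended deduction being that a (reduced) monoid satisfying the ACCP is hereditarily atomic (ACCP passes to submonoids, and ACCP implies atomicity), whence the characterization in part~(1) forces $\mathsf{n}(b)\geq\mathsf{d}(b)$ for all $b\in\mathcal B$. Your route is genuinely more direct: you extract only Equation~\ref{eq: equation inducing ACCP property} from that proof and bypass the rather delicate forward implication of Theorem~\ref{theorem: atomic description of trivial GMCs}(1), which requires producing a specific non-atomic submonoid and a divisibility argument via a prime dividing $\mathsf{d}(q)-\mathsf{n}(q)$. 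The paper's route, on the other hand, packages the result as a consequence of a structural equivalence already established, so no new computation is needed once the theorem is in hand.
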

	\begin{remark}
		Corollary~\ref{cor: characterization of multicyclic monoid satisfying the ACCP condition} was first proved in~\cite[Corollary 4.4]{CGG2019}. 
	\end{remark}

\section{Sets of Lengths of Canonical Rational Multicyclic Monoids}

In this section we introduce an atomic family of rational multicyclic monoids and characterize their sets of lengths and unions of sets of lengths. Moreover, we extend \cite[Theorem~3.3]{ScGG2019} stating that the sets of lengths of a rational cyclic monoid are arithmetic progressions to bigger families of rational multicyclic monoids. We conclude by realizing arithmetic progressions as the sets of distances of certain Puiseux monoids. 

\begin{definition}
	Let $M_{\mathcal{B}}$ be a rational multicyclic monoid. We say that $M_{\mathcal{B}}$ is \emph{canonical} provided that $\mathcal{B} \cap \nn = \emptyset$ and $\gcd(\mathsf{d}(b), \mathsf{d}(b')) = 1$ for all $b,b' \in \mathcal{B}$ with $b \neq b'$\!.
\end{definition}

\begin{remark} \label{remark: atoms of a canonical rational multicyclic monoid}
	Because of Proposition~\ref{prop: sufficient condition for being an atom}, a canonical rational multicyclic monoid $M_{\mathcal{B}}$ is atomic and $\{b^n \mid b \in\mathcal{B},\, n \in\nn\} \subseteq \mathcal{A}(M_{\mathcal{B}})$. In fact, it is not hard to check that $ \mathcal{A}(M_{\mathcal{B}}) = \{b^n \mid b \in\mathcal{B},\, n \in\nn_0\}$, which implies that $\mathcal{B}$ is minimal (in the sense of Definition~\ref{def: rational multicyclic monoid}).
\end{remark}

Before proving our main result (Theorem~\ref{theorem: characterization of sets of lengths of canonical multicyclic monoids}), we need to collect some technical lemmas. For the rest of the section, given a summation $\sum_{i = 0}^{n} c_i b_i^{e_i}$ we assume without loss that $e_i = 0$ if and only if $i = 0$ and if $b_i = b_j$ then $e_i < e_j$ if and only if $i < j$.

\begin{definition} \label{def: hub factorization}
	Let $x$ be a nonzero element of a canonical rational multicyclic monoid $M_{\mathcal{B}}$. Given a factorization $z = \sum_{i = 0}^{n} c_i b_i^{e_i} \in \mathsf{Z}(x)$ with $n, c_i,e_i \in\nn_0$ and $b_i \in \mathcal{B}$ for every $i \in \llbracket 0,n \rrbracket$, we say that $z$ is a \emph{hub factorization} of $x$ if $c_i < \mathsf{d}(b_i)$ for all $i \in \llbracket 1,n \rrbracket$.  
\end{definition}

\begin{lemma}\label{lemma: hub factorization unique iff M is canonical}
	Let $M_\mathcal{B}$ be a canonical rational multicyclic monoid. Then every nonzero element of $M_{\mathcal{B}}$ has exactly one hub factorization.
\end{lemma}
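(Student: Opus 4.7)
The plan is to prove existence and uniqueness of hub factorizations separately.

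For existence, Remark~\ref{remark: atoms of a canonical rational multicyclic monoid} guarantees that $M_\mathcal{B}$ is atomic, so any nonzero $x \in M_\mathcal{B}$ admits at least one factorization, which I would write as $z = c_0 + \sum_{b \in \mathcal{B}} \sum_{e \geq 1} c_{b,e}\, b^e$ with finitely many nonzero $c_{b,e}$. I would transform $z$ into hub form by iterating the identity $\mathsf{d}(b)\, b^e = \mathsf{n}(b)\, b^{e-1}$, which is valid for every $b \in \qq_{>0}$ and $e \geq 1$. For a fixed $b \in \mathcal{B}$, process its exponents from the largest down: replace the coefficient $c_{b,e}$ by its residue $c_{b,e} \bmod \mathsf{d}(b)$ and add $\lfloor c_{b,e} / \mathsf{d}(b) \rfloor \cdot \mathsf{n}(b)$ to $c_{b,e-1}$ (interpreting $c_{b,0}$ as $c_0$). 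After sweeping from the top exponent downward, every $c_{b,e}$ with $e \geq 1$ is strictly below $\mathsf{d}(b)$, and since the operation only touches coefficients of $b$ and the integer part $c_0$, running it for each $b \in \mathcal{B}$ in turn yields a hub factorization.

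For uniqueness, suppose $x$ has two hub factorizations and, after padding with zero coefficients so that both are indexed by the same finite set, subtract them to obtain
\[
d_0 + \sum_{b \in \mathcal{B}} \sum_{e \geq 1} d_{b,e}\, b^e = 0, \qquad |d_{b,e}| < \mathsf{d}(b).
\]
Assume toward a contradiction that some $d_{b,e}$ with $e \geq 1$ is nonzero, and for each $b \in \mathcal{B}$ let $E_b := \max\{e \geq 1 : d_{b,e} \neq 0\}$ with the convention $E_b = 0$ if no such $e$ exists. Set $D := \prod_{b \in \mathcal{B}} \mathsf{d}(b)^{E_b}$ and pick $b_0 \in \mathcal{B}$ with $E_{b_0} \geq 1$. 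Multiplying the displayed equation by $D$ yields an integer identity, which I would then reduce modulo $\mathsf{d}(b_0)$. The term $d_0 D$ vanishes modulo $\mathsf{d}(b_0)$ because $\mathsf{d}(b_0)^{E_{b_0}} \mid D$, and each summand with $b \neq b_0$ carries a factor $\mathsf{d}(b_0)^{E_{b_0}}$ from the product $\prod_{b'' \neq b} \mathsf{d}(b'')^{E_{b''}}$ and hence also vanishes. Among the $b = b_0$ summands, the factor $\mathsf{d}(b_0)^{E_{b_0} - e}$ kills every one with $e < E_{b_0}$, leaving only
\[
d_{b_0, E_{b_0}}\, \mathsf{n}(b_0)^{E_{b_0}} \prod_{b'' \neq b_0} \mathsf{d}(b'')^{E_{b''}} \equiv 0 \pmod{\mathsf{d}(b_0)}.
\]
Canonicality supplies $\gcd(\mathsf{n}(b_0), \mathsf{d}(b_0)) = 1$ and $\gcd(\mathsf{d}(b''), \mathsf{d}(b_0)) = 1$ for every $b'' \neq b_0$, so the coefficient of $d_{b_0, E_{b_0}}$ in the displayed congruence is a unit modulo $\mathsf{d}(b_0)$. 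This forces $\mathsf{d}(b_0) \mid d_{b_0, E_{b_0}}$, contradicting $|d_{b_0, E_{b_0}}| < \mathsf{d}(b_0)$. Thus $d_{b,e} = 0$ for every $b \in \mathcal{B}$ and $e \geq 1$, and the remaining equation gives $d_0 = 0$.

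The main obstacle is the modular-arithmetic bookkeeping in the uniqueness step: one has to verify that clearing denominators by $D$ and reducing modulo $\mathsf{d}(b_0)$ leaves exactly one non-vanishing monomial, and that its coefficient is a unit modulo $\mathsf{d}(b_0)$. Both facts are precisely what the canonical hypothesis, namely the pairwise coprimality of the denominators $\{\mathsf{d}(b) : b \in \mathcal{B}\}$ together with $\gcd(\mathsf{n}(b), \mathsf{d}(b)) = 1$, supplies; this is the feature that prevents the lemma from being a mere amalgamation of independent single-$b$ statements.
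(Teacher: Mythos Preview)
Your proof is correct and follows essentially the same approach as the paper's. For existence you use the identity $\mathsf{d}(b)\,b^e=\mathsf{n}(b)\,b^{e-1}$ to carry excess downward just as the paper does, and for uniqueness you clear denominators and extract a divisibility $\mathsf{d}(b_0)\mid d_{b_0,E_{b_0}}$ that contradicts the hub bound, which is exactly the mechanism behind the paper's ``after clearing denominators \ldots\ it is easy to see that $\mathsf{d}(b_j)\mid c_j-d_j$''; the only cosmetic difference is that the paper selects the globally largest index $j$ with $c_j\neq d_j$, whereas you fix an arbitrary base $b_0$ with a nonzero difference and work at its top exponent $E_{b_0}$.
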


\begin{proof}
	Let $x$ be a nonzero element of $M_{\mathcal{B}}$. First, we prove that there exists a hub factorization of $x$. To do so, we describe an algorithm to transform a given factorization $z \in \mathsf{Z}(x)$ into a hub factorization of $x$. Let $z = \sum_{i = 0}^{n} c_i b_i^{e_i} \in \mathsf{Z}(x)$ with $n, c_i,e_i \in\nn_0$ and $b_i \in \mathcal{B}$ for every $i \in \llbracket 0,n \rrbracket$. If the inequality $c_i \geq \mathsf{d}(b_i)$ holds for some $i \in \llbracket 1,n \rrbracket$ then $c_i = q_i \mathsf{d}(b_i) + r_i$ with $q_i \in\nn$ and $r_i \in \llbracket 0,\mathsf{d}(b_i) - 1 \rrbracket$. Then we can modify $c_ib_i^{e_i}$ as follows
	\[
		c_i b_i^{e_i} = [q_i \mathsf{d}(b_i) + r_i]b_i^{e_i} = r_ib_i^{e_i} + q_i \mathsf{d}(b_i)b_i^{e_i} = r_ib_i^{e_i} + q_i \mathsf{n}(b_i)b_i^{e_i - 1}\!.
	\]
	This modification reduces the exponent of the summand $cb^e$ violating the condition $c < \mathsf{d}(b)$, where $e > 0$, which means that we cannot carry out this transformation infinitely many times. Repeating this reasoning for the summands $c_jb_j^{e_j}$ of $z$ for which $c_j \geq \mathsf{d}(b_j)$, we obtain a hub factorization of $x$. 
	
	Now let $z_h = \sum_{i = 0}^{n} c_i b_i^{e_i}$ and $z_h' = \sum_{i = 0}^{n} d_i b_i^{e_i}$ be two hub factorizations of $x$ with $n,c_i,e_i,d_i\in\nn_0$ and $b_i \in \mathcal{B}$ for every $i \in\llbracket 0,n \rrbracket$; there is no loss in assuming that only the coefficients of the summations may be different. For the sake of a contradiction, suppose that $z_h \neq z_h'$. Then there exists $j\in \llbracket 1,n \rrbracket$ such that $c_j \neq d_j$. Assuming that $j$ is as large as possible, we have
	\begin{equation} \label{eq: unique hub factorization}
	(c_j - d_j)b_j^{e_j} = \sum_{i = 0}^{j - 1} (d_i - c_i) b_i^{e_i}.
	\end{equation}
	After clearing denominators in Equation~\ref{eq: unique hub factorization}, it is easy to see that $\mathsf{d}(b_j) \mid c_j - d_j$. But this is a contradiction as $c_j,\!d_j < \mathsf{d}(b_j)$. Therefore, $z_h = z_h'$.
\end{proof}

In the proof of Lemma~\ref{lemma: hub factorization unique iff M is canonical} we established that a factorization $z \in \mathsf{Z}(x)$ of a nonzero element $x$ can be transformed into the hub factorization of $x$ by a finite sequence of modifications. We record this observation in Lemma~\ref{lemma: any factorization of an element can be transformed into the hub factorization of that element} for future reference.

\begin{lemma} \label{lemma: any factorization of an element can be transformed into the hub factorization of that element}
	Let $M_{\mathcal{B}}$ be a canonical rational multicyclic monoid and $z \in \mathsf{Z}(x)$ a factorization of a nonzero element $x$. If $z_h$ is the hub factorization of $x$ then there exist factorizations $z = z_1, \ldots, z_n = z_h \in \mathsf{Z}(x)$ satisfying that $\big||z_i| - |z_{i + 1}|\big| = |\mathsf{n}(b) - \mathsf{d}(b)|$ for some $b \in \mathcal{B}$ and all $i \in\llbracket 1,n-1 \rrbracket$.
\end{lemma}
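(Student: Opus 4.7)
The plan is to decompose the algorithm implicit in the proof of Lemma~\ref{lemma: hub factorization unique iff M is canonical} into single atomic rewrite steps and track the length change of each. The key observation is the identity
\[
    \mathsf{d}(b)\cdot b^e \;=\; \mathsf{n}(b)\cdot b^{e-1}
\]
valid for all $b \in \mathcal{B}$ and $e \in \nn$, where both sides are sums of atoms of $M_{\mathcal{B}}$ by Remark~\ref{remark: atoms of a canonical rational multicyclic monoid}. If a factorization of $x$ contains at least $\mathsf{d}(b)$ copies of the atom $b^e$ with $e \geq 1$, replacing $\mathsf{d}(b)$ of those copies by $\mathsf{n}(b)$ copies of $b^{e-1}$ yields another factorization of $x$ whose length differs from the original by exactly $|\mathsf{n}(b) - \mathsf{d}(b)|$, which is precisely the kind of step required by the statement.

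Starting from $z$, I would apply this atomic rewrite iteratively, following the selection rule from the proof of Lemma~\ref{lemma: hub factorization unique iff M is canonical}: at each stage pick, among the summands $c_j b_j^{e_j}$ with $e_j \geq 1$ that violate the hub bound $c_j < \mathsf{d}(b_j)$, one with maximal exponent $e_j$, and perform a single rewrite there. The resulting $z_{i+1}$ is a factorization of $x$ with $\big||z_i| - |z_{i+1}|\big| = |\mathsf{n}(b_j) - \mathsf{d}(b_j)|$, as required. Since the big-step version used in the proof of Lemma~\ref{lemma: hub factorization unique iff M is canonical} is already known to terminate after finitely many big steps, and each big step (removing $q_j\mathsf{d}(b_j)$ copies of $b_j^{e_j}$ at once) decomposes into exactly $q_j$ of our atomic rewrites performed consecutively, the atomic procedure terminates after finitely many iterations.

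The main obstacle is to argue that the terminal factorization is exactly $z_h$. But once no summand $c_j b_j^{e_j}$ with $e_j \geq 1$ violates the hub bound, the terminal factorization satisfies the defining condition of a hub factorization; by the uniqueness half of Lemma~\ref{lemma: hub factorization unique iff M is canonical}, it must equal $z_h$. Writing the terminal factorization as $z_n$ produces the desired sequence $z = z_1, \ldots, z_n = z_h$. A minor point to verify along the way is that every atomic step is legitimate, i.e., that the factorization at stage $i$ actually contains $\mathsf{d}(b_j)$ copies of $b_j^{e_j}$; this is built into the selection rule, which only fires at indices $j$ with $c_j \geq \mathsf{d}(b_j)$.
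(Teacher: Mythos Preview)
Your proposal is correct and matches the paper's approach: the paper gives no separate argument for this lemma at all, simply recording it as an observation extracted from the algorithm in the proof of Lemma~\ref{lemma: hub factorization unique iff M is canonical}, and your write-up is precisely that extraction made explicit by splitting each ``big'' step $c_i b_i^{e_i} \mapsto r_i b_i^{e_i} + q_i\mathsf{n}(b_i) b_i^{e_i-1}$ into $q_i$ single rewrites. One small remark: the ``maximal exponent'' selection rule you introduce is harmless but unnecessary, and in fact your termination argument does not use it---you invoke termination of the big-step procedure and then expand each big step, so any selection order inherited from the big-step algorithm suffices.
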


\begin{remark} \label{remark: we can transform the hub factorization into any other factorization}
	Let $z_h = \sum_{i = 0}^{n} c_i b_i^{e_i}$ be the hub factorization of a nonzero element $x$, where $n,c_i,e_i \in\nn_0$ and $b_i \in \mathcal{B}$ for every $i \in\llbracket 0,n \rrbracket$. By symmetry, we can transform $z_h$ into a given factorization $z \in \mathsf{Z}(x)$ by a finite sequence of modifications of the form $\mathsf{n}(b_i)b_i^m = \mathsf{d}(b_i)b_i^{m + 1}$ with $b_i \in \mathcal{B}$ and $m \in \nn_0$. Therefore, $z_h$ is the factorization of maximum length of $x$ provided that $c_i < \min\{\mathsf{n}(b_i), \mathsf{d}(b_i)\}$ for $i \in \llbracket 1,n \rrbracket$ and $c_0 < \mathsf{n}(b)$ for all $b \in \mathcal{B}_{<1}$.
\end{remark}

\begin{lemma} \label{lemma: factorization of minimum length when all primitive generators are proper fractions}
	Let $\mathcal{B}$ be a (not necessarily finite) subset of $\mathbb{Q}_{<1}$ satisfying that $\mathsf{n}(b) \neq 1$ and $\gcd(\mathsf{d}(b),\mathsf{d}(b')) = 1$ for all $b,b' \in\mathcal{B}$ with $b \neq b'$. Consider the atomic monoid $M = \langle b^n \mid b \in\mathcal{B},\, n \in\nn_0 \rangle$. If $x$ is a nonzero element of $M$ and $z = \sum_{i = 0}^{n} c_i b_i^{e_i}$ a factorization of $x$ with $n, c_i,e_i \in\nn_0$ and $b_i \in \mathcal{B}$ for every $i \in \llbracket 0,n \rrbracket$ then 
%	$z = \sum_{k = 0}^{N} c_k b_k^{e_k} \in \mathsf{Z}(x)$, where $N,c_k, e_k \in\nn_0$ and $b_k \in\mathcal{B}$ for each $k \in\llbracket 0,N \rrbracket$.
	the following statements hold:
	\begin{enumerate}
		\item $\min\mathsf{L}(x) = |z|$ if and only if $c_i < \mathsf{d}(b_i)$ for all $i \in\llbracket 1,n \rrbracket$;
		\item there exists exactly one factorization in $\mathsf{Z}(x)$ of minimum length.
		\item if $z$ is the factorization of minimum length of $x$ then, for $z' \in \mathsf{Z}(x)$, there exist factorizations $z' = z_1, \ldots, z_n = z$ such that $\big||z_i| - |z_{i + 1}|\big| = |\mathsf{n}(b) - \mathsf{d}(b)|$ for some $b \in \mathcal{B}$ and all $i \in\llbracket 1,n-1 \rrbracket$.
	\end{enumerate} 
\end{lemma}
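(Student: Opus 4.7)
Because every $b \in \mathcal{B}$ is a proper fraction with $\mathsf{n}(b) \neq 1$ and the denominators of distinct generators are pairwise coprime, Proposition~\ref{prop: sufficient condition for being an atom} applies and yields $\mathcal{A}(M) = \{b^n \mid b \in \mathcal{B},\, n \in \nn_0\}$; moreover $\mathsf{n}(b) < \mathsf{d}(b)$ for every $b \in \mathcal{B}$. The whole argument rests on the single local identity
\[
    \mathsf{d}(b)\, b^{m+1} = \mathsf{n}(b)\, b^m \qquad (b \in \mathcal{B},\ m \in \nn_0),
\]
which, read as a rewriting of factorizations, exchanges $\mathsf{d}(b)$ copies of the atom $b^{m+1}$ for $\mathsf{n}(b)$ copies of $b^m$ and thereby \emph{strictly shortens} the factorization by $\mathsf{d}(b) - \mathsf{n}(b) \geq 1$.

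For part (1), the forward direction is immediate: if some $c_i \geq \mathsf{d}(b_i)$ with $i \geq 1$, a single application of the identity produces a strictly shorter factorization of $x$, so $|z|$ cannot equal $\min \mathsf{L}(x)$. For the reverse direction, start from an arbitrary $w \in \mathsf{Z}(x)$ and iterate the length-shortening rewrite as long as some summand violates $c_i < \mathsf{d}(b_i)$ with $i \geq 1$. Termination is guaranteed because each step strictly decreases the nonnegative integer $\sum_i c_i e_i$ (a one-line check: the step changes this quantity by $m(\mathsf{n}(b) - \mathsf{d}(b)) - \mathsf{d}(b) < 0$). The process therefore halts at a factorization $w' \in \mathsf{Z}(x)$ with $c_i < \mathsf{d}(b_i)$ for every $i \geq 1$; if any two such factorizations of $x$ necessarily coincide, then $w' = z$, whence $|w| \geq |w'| = |z|$, and so $|z| = \min \mathsf{L}(x)$.

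The required uniqueness (which also immediately gives (2)) mirrors the end of the proof of Lemma~\ref{lemma: hub factorization unique iff M is canonical}. Given two factorizations $\sum c_i b_i^{e_i}$ and $\sum d_i b_i^{e_i}$ of $x$ of the prescribed form (aligning supports over the finitely many generators they jointly involve by allowing zero coefficients), choose the largest index $j$ where they disagree, obtaining
\[
    (c_j - d_j)\, b_j^{e_j} = \sum_{i < j} (d_i - c_i)\, b_i^{e_i}.
\]
Clearing denominators by the least common multiple of the $\mathsf{d}(b_i)^{e_i}$ (noting that by the standing ordering convention $b_i = b_j$ forces $e_i < e_j$, so every right-hand summand picks up at least one factor of $\mathsf{d}(b_j)$) and then invoking $\gcd(\mathsf{d}(b_j), \mathsf{n}(b_j)) = 1$ together with $\gcd(\mathsf{d}(b_j), \mathsf{d}(b)) = 1$ for every $b \in \mathcal{B} \setminus \{b_j\}$ forces $\mathsf{d}(b_j) \mid c_j - d_j$, contradicting $0 < |c_j - d_j| < \mathsf{d}(b_j)$.

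Part (3) drops out of the reduction itself: the sequence of factorizations produced while reducing $z' \in \mathsf{Z}(x)$ satisfies $\big||z_i| - |z_{i+1}|\big| = \mathsf{d}(b) - \mathsf{n}(b) = |\mathsf{n}(b) - \mathsf{d}(b)|$ by construction, and by (1) and (2) its terminal factorization is precisely $z$. The main obstacle is the bookkeeping in the divisibility step of the uniqueness argument, namely checking that after clearing denominators every right-hand contribution genuinely carries a factor of $\mathsf{d}(b_j)$ even in the presence of repeated primitive generators; once that is in place, the rest of the proof is routine and parallels Lemmas~\ref{lemma: hub factorization unique iff M is canonical} and~\ref{lemma: any factorization of an element can be transformed into the hub factorization of that element}.
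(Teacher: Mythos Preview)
Your proposal is correct and follows the same approach as the paper. The paper actually defers parts~(1) and~(2) to the reader (citing the single-generator case in \cite[Lemma~3.1]{ScGG2019}) and only spells out~(3), using precisely the rewriting $\mathsf{d}(b_i)b_i^{e_i} = \mathsf{n}(b_i)b_i^{e_i-1}$ and terminating via the decreasing length; your write-up simply fills in those omitted details, with the uniqueness argument lifted verbatim from Lemma~\ref{lemma: hub factorization unique iff M is canonical} as you indicate. One cosmetic remark: your monovariant $\sum_i c_i e_i$ works, but the paper (and your own first paragraph) already observes that each step strictly shortens the factorization, so the length itself is the simpler terminating quantity.
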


\begin{proof}
	We leave the proofs of $(1)$ and $(2)$ to the reader as they mimick their counterparts for rational cyclic monoids (see \cite[Lemma~3.1]{ScGG2019}). As for $(3)$, let $z' = \sum_{i = 0}^{m} d_i b_i^{e_i} \in \mathsf{Z}(x)$ with $m, d_i,e_i \in\nn_0$ and $b_i \in \mathcal{B}$ for every $i \in \llbracket 0,m \rrbracket$. There is no loss in assuming that $m = n$. It is not hard to describe an algorithm to transform $z_1 \coloneqq z'$ into $z$.  If $z_1 \neq z$ then there exists $i \in \llbracket 1,n \rrbracket$ such that $d_i \geq \mathsf{d}(b_i)$ by~(1). Then applying the identity $\mathsf{d}(b_i)b_i^{e_i} = \mathsf{n}(b_i)b_i^{e_i - 1}$ we obtain a factorization $z_2 \in \mathsf{Z}(x)$ such that $|z_1| > |z_2|$. Repeating the same reasoning for $z_2$ we have that either $z_2 = z$ or there exists $z_3 \in \mathsf{Z}(x)$ such that $|z_2| > |z_3|$, and so on. Since there is no strictly decreasing sequence of positive integers, our procedure eventually stops, from which $(3)$ follows readily.
\end{proof}

\begin{remark} \label{remark: when all primitive generators are proper fractions there is no loss in assuming that the monoid is a canonical rational multicyclic monoid}
	With notation as in Lemma~\ref{lemma: factorization of minimum length when all primitive generators are proper fractions}, note that we used modifications of the form $\mathsf{d}(b_i)b_i^{e_i} = \mathsf{n}(b_i)b_i^{e_i - 1}$ with $i \in \nn$ to transform $z'$ into $z$, the factorization of minimum length of $x$. Consequently, if there exists $k \in \llbracket 1,n \rrbracket$ such that $c_k = 0$ and $d_k \neq 0$ then $\mathsf{n}(b_k)\, |_M \,x$.
\end{remark}

%\begin{lemma} \label{lemma: factorization of maximum length when all primitive generators are improper fractions}
%	Let $\mathcal{B}\subseteq\mathbb{Q}_{>1}$ such that $\gcd(\mathsf{d}(b),\mathsf{d}(b')) = 1$ for all $b,b' \in\mathcal{B}$, and consider the Puiseux monoid $M = \langle b^n \mid b \in\mathcal{B}, n \in\nn_0 \rangle$. Let $x \in M$ and $z = \sum_{k = 0}^{N} c_k b_k^{e_k} \in \mathsf{Z}(x)$, where $N,c_k, e_k \in\nn_0$ and $b_k \in\mathcal{B}$ for each $k \in\llbracket 0,N \rrbracket$. The following statements hold:
%	\begin{enumerate}
%		\item $\max\mathsf{L}(x) = |z|$ if and only if $c_k < \mathsf{d}(b_k)$ for all $k \in\llbracket 1,N \rrbracket$;
%		\item there exists exactly one factorization in $\mathsf{Z}(x)$ of maximum length.
%	\end{enumerate} 
%\end{lemma}

Now we are in a position to prove the main result of this paper.

\begin{theorem} \label{theorem: characterization of sets of lengths of canonical multicyclic monoids}
	Let $x$ be a nonzero element of a canonical rational multicyclic monoid $M_{\mathcal{B}}$. Then the following statements hold:\vspace{2 pt}
	\begin{enumerate}
		\item $\mathsf{L}(x)$ is the union of finitely many MAPs. Furthermore, $|\mathsf{L}(x)| = \infty$ if and only if $\mathsf{n}(b)b^l \mid_{M_{\mathcal{B}}} x$ for some $b \in\mathcal{B}_{< 1}$ and $l \in\nn_0$;\vspace{3pt}
%		\item $\mathsf{L}(x)$ is finite if and only if $\mathsf{n}(b)b^l \,\nmid_{M_{\mathcal{B}}}\, x$ for any $b \in\mathcal{B}_{< 1}$ and $l \in\nn_0$. Furthermore, if $b > 1$ for all $b \in\mathcal{B}$ then there exists $N \in\nn$ such that every $L \in\mathcal{L}(M_{\mathcal{B}})$ is an AAMP with difference in $\Delta(M_{\mathcal{B}})$ and bound $N$; \vspace{3pt}
		\item $\mathsf{L}(x)$ is an arithmetic progression if $|\mathsf{n}(b) - \mathsf{d}(b)| = |\mathsf{n}(b') - \mathsf{d}(b')|$ for all $b,b' \in \mathcal{B}$.
	\end{enumerate}
\end{theorem}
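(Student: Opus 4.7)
The plan rests on the hub factorization machinery: let $z_h$ denote the unique hub factorization of $x$ (Lemma~\ref{lemma: hub factorization unique iff M is canonical}), and recall from Remark~\ref{remark: we can transform the hub factorization into any other factorization} that every $z \in \mathsf{Z}(x)$ is obtained from $z_h$ by a finite sequence of modifications of the form $\mathsf{n}(b)\,b^m \leftrightarrow \mathsf{d}(b)\,b^{m+1}$, each altering the length by $\pm(\mathsf{d}(b) - \mathsf{n}(b))$. Hence, letting $k_b$ denote the net number of forward modifications (summed over all levels) involving a given $b \in \mathcal{B}$,
\[
\mathsf{L}(x) = \Bigl\{|z_h| + \sum_{b \in \mathcal{B}} k_b\,(\mathsf{d}(b) - \mathsf{n}(b)) : (k_b)_{b \in \mathcal{B}} \text{ is realizable}\Bigr\},
\]
so the whole task reduces to describing the set of realizable tuples $(k_b)_b$.

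For the infinite-length characterization in part~(1), suppose first that $\mathsf{n}(b)\,b^l \mid_{M_{\mathcal{B}}} x$ for some $b \in \mathcal{B}_{<1}$ and $l \in \nn_0$. Then some factorization of $x$ contains at least $\mathsf{n}(b)$ copies of $b^l$, and cascading the expansion $\mathsf{n}(b)\,b^m \to \mathsf{d}(b)\,b^{m+1}$ through levels $l, l+1, l+2, \dots$ is available because each application produces $\mathsf{d}(b) > \mathsf{n}(b)$ copies at the next level; this yields arbitrarily long factorizations. Conversely, if no such divisibility holds then for every $b \in \mathcal{B}_{<1}$ we have $c_0 < \mathsf{n}(b)$ (taking $l = 0$) and each nonzero hub coefficient $c_i$ satisfies $c_i < \mathsf{n}(b_i)$ (the case $b_i \in \mathcal{B}_{>1}$ is automatic since $c_i < \mathsf{d}(b_i) < \mathsf{n}(b_i)$), so Remark~\ref{remark: we can transform the hub factorization into any other factorization} identifies $z_h$ as the factorization of maximum length and $\mathsf{L}(x)$ is bounded above, hence finite.

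For the MAP structure itself, I would exploit the pairwise coprimality of the $\mathsf{d}(b)$'s: modifications at a fixed level $m \geq 1$ only involve powers of a single $b$, so the sole coupling across distinct generators occurs at level $m = 0$ through the shared coefficient of the atom $1$. Conditioning on the finitely many admissible patterns of level-$0$ moves (each choice of how the initial copies of $1$ are partitioned among the expansions $\mathsf{n}(b)\cdot 1 \to \mathsf{d}(b)\cdot b$), the residual freedom in $(k_b)_b$ factors as a Cartesian product of integer intervals (possibly semi-infinite in the $b < 1$ direction, precisely under the cascade condition above). Through the length formula, each such product transcribes into a MAP summand of $\mathsf{L}(x)$, and their finite union is the claimed decomposition.

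For part~(2), the common value $D \coloneqq |\mathsf{n}(b) - \mathsf{d}(b)|$ forces $\mathsf{L}(x) \subseteq |z_h| + D\zz$, and each MAP summand from part~(1) collapses to an ordinary arithmetic progression with difference $D$. A discrete intermediate-value argument then promotes their union to a single arithmetic progression: given $z, z' \in \mathsf{Z}(x)$ with lengths $L < L'$, the modification path from $z$ to $z'$ guaranteed by Remark~\ref{remark: we can transform the hub factorization into any other factorization} is a $\pm D$ walk on $|z_h| + D\zz$ and therefore visits every lattice point between $L$ and $L'$. The main obstacle I anticipate is the combinatorial bookkeeping in part~(1): rigorously showing that the realizable $(k_b)_b$ tuples decompose into finitely many Cartesian products of integer intervals, which hinges on controlling the level-$0$ coupling so that only finitely many incomparable patterns arise, and on verifying that, for each $b$, the per-generator reachable set is itself an integer interval, which should follow from the bounded nature of consolidation moves (for $b > 1$) and the monotone cascade structure (for $b < 1$).
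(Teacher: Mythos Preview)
Your overall strategy via the hub factorization and modification moves is correct and close to the paper's. In particular, your arguments for the infinite-length characterization in part~(1) and for part~(2) are essentially the paper's: the paper also deduces finiteness of $\mathsf{L}(x)$ from Remark~\ref{remark: we can transform the hub factorization into any other factorization} when no $\mathsf{n}(b)b^l$ divides $x$, and it proves part~(2) by using the $\pm D$ walk from Lemma~\ref{lemma: any factorization of an element can be transformed into the hub factorization of that element} to rule out gaps (phrased there as a contradiction on $|\Delta(x)|$).

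Where the paper departs from your plan is precisely at the combinatorial bookkeeping you flag as the main obstacle. Rather than attempting a unified description of the realizable tuples $(k_b)_{b \in \mathcal{B}}$, the paper splits $\mathcal{B}$ into $\mathcal{B}' = \mathcal{B}_{<1}$ and $\mathcal{B} \setminus \mathcal{B}'$ and treats the two pieces very differently. For the proper-fraction case it does carry out something close to your level-$0$ conditioning: it introduces $V = \{b_i : c_i \ge \mathsf{n}(b_i),\ i \ge 1\}$ and $\mathcal{U} = \{U \subseteq \mathcal{B} : \sum_{b \in U} \mathsf{n}(b) \le c_0\}$, and shows that $\mathsf{L}(x) = \bigcup_{W \in \{V \cup U : U \in \mathcal{U}\}} \bigl(|z_h| + \sum_{b \in W} P_\infty(\mathsf{d}(b) - \mathsf{n}(b))\bigr)$, so every MAP summand here already has all its intervals semi-infinite. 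For the improper-fraction part, however, the paper sidesteps any interval analysis altogether: it invokes that $M_{\mathcal{B} \setminus \mathcal{B}'}$ is an FF-monoid (so each $\mathsf{L}_{M_{\mathcal{B}\setminus\mathcal{B}'}}(y)$ is finite), observes that only finitely many pairs $(y,y') \in M_{\mathcal{B}\setminus\mathcal{B}'} \times M_{\mathcal{B}'}$ with $x = y + y'$ exist, and writes $\mathsf{L}(x)$ as the finite union over such pairs of $\mathsf{L}_{M_{\mathcal{B}\setminus\mathcal{B}'}}(y) + \mathsf{L}_{M_{\mathcal{B}'}}(y')$. This entirely avoids having to show that the $b > 1$ reachable sets are integer intervals, which is the part of your sketch that would be hardest to make rigorous. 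Your unified approach may well be salvageable, but the paper's proper/improper split together with the FF-monoid shortcut is what makes the argument go through without that bookkeeping.
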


\begin{proof}
	Let $z_h = \sum_{i = 0}^{n} c_i b_i^{e_i}$ be the hub factorization of $x$, where $n,c_i, e_i \in\nn_0$ and $b_i \in\mathcal{B}$ for each $i \in\llbracket 0,n \rrbracket$ (Lemma~\ref{lemma: hub factorization unique iff M is canonical}), and set 
	\begin{equation*} \label{eq: terms with reduction}
	V \coloneqq \left\{b_i \in \mathcal{B} \mid c_i \geq \mathsf{n}(b_i), \,i \in \llbracket 1,n \rrbracket\right\}, \hspace{1 cm} \mathcal{U} \coloneqq \left\{U \subseteq \mathcal{B} \,\,\Bigg| \,\sum_{b \in U} \mathsf{n}(b) \leq c_0\right\},
	\end{equation*}
	and $\mathcal{W} \coloneqq \{V \cup U \mid U \in \mathcal{U}\}$. Note that $\emptyset\in \mathcal{U}$, which implies that $\mathcal{W} \neq \emptyset$. 
	
	To tackle the first statement of~$(1)$, we start by analyzing the case where the elements of $\mathcal{B}$ are proper fractions. Under this assumption $z_h$ is the factorization of minimum length of $x$ by Lemma~\ref{lemma: factorization of minimum length when all primitive generators are proper fractions}. For each $W = \{b_1, \ldots, b_m\} \in \mathcal{W}$, we set 
	\[
	\mathsf{L}_W \coloneqq \left\{|z_h| + \sum_{j = 1}^{m} P_{\infty}(\mathsf{d}(b_j) - \mathsf{n}(b_j))\right\};
	\]
	on the other hand, if $\emptyset\in \mathcal{W}$ then we set $L_{\emptyset} \coloneqq \{|z_h|\}$. We shall prove that the equation $\mathsf{L}(x) = \bigcup_{W \in \mathcal{W}} \mathsf{L}_W$ holds. 
	
	Consider a factorization $z = \sum_{i = 0}^{k} c'_i b_i^{e_i} \in \mathsf{Z}(x)$, where $k,c_i' \in\nn_0$ and $b_i \in\mathcal{B}$ for each $i \in\llbracket 0,k \rrbracket$. Without loss of generality we can assume that $k = n$. Next, we describe an algorithm to transform $z_1 \coloneqq z$ into $z_h$ (we already described a similar procedure in the proof of Lemma~\ref{lemma: hub factorization unique iff M is canonical}). Through our iterations, we are going to keep track of two subsets $V'$ and $U'$ of $\mathcal{B}$. Initially, we have $V' = U' = \emptyset$. The first step is to check whether the factorizations $z_1$ and $z_h$ coincide. If this is the case then we stop. On the other hand, if $z_1 \neq z_h$ then $z_1$ is not the factorization of minimum length of $x$, which implies that $c'_i \geq \mathsf{d}(b_i)$ for some $i \in \llbracket 1,n \rrbracket$ by Lemma~\ref{lemma: factorization of minimum length when all primitive generators are proper fractions}. Hence $c'_i = q_i \mathsf{d}(b_i) + r_i$ with $q_i \in\nn$ and $r_i \in \llbracket 0,\mathsf{d}(b_i) - 1 \rrbracket$. We modify $c'_ib_i^{e_i}$ as follows
	\begin{equation} \label{eq: transformation to obtain the hub factorization}
	c'_i b_i^{e_i} = [q_i \mathsf{d}(b_i) + r_i]b_i^{e_i} = r_ib_i^{e_i} + q_i \mathsf{d}(b_i)b_i^{e_i} = r_ib_i^{e_i} + q_i \mathsf{n}(b_i)b_i^{e_i - 1}\!,
	\end{equation}
	and as a result we obtain a factorization $z_2 \in \mathsf{Z}(x)$ such that $|z_1| > |z_2|$. We add $b_i$ to $V'$, and if $e_i = 1$ then we also add $b_i$ to $U'$. We repeat the first step over the factorization $z_2 \in \mathsf{Z}(x)$, and so on. Since there is no strictly decreasing sequence of positive integers, our procedure eventually stops. Then there exist factorizations $z = z_1, \ldots, z_t = z_h \in \mathsf{Z}(x)$ and (possibly repeated) elements $b_{k_1}, \ldots, b_{k_{t - 1}} \in \mathcal{B}$ such that $|z_j| - |z_{j + 1}| = \mathsf{d}(b_{k_j}) - \mathsf{n}(b_{k_j})$ for all $j \in \llbracket 1,t - 1 \rrbracket$. Thus,
	\[
		|z| = |z_h| + \sum_{j = 1}^{t - 1} \mathsf{d}(b_{k_j}) - \mathsf{n}(b_{k_j}).
	\]
	Moreover, it is not hard to see that due to the nature of the transformation~\ref{eq: transformation to obtain the hub factorization} the set $W' = V' \cup U'$ is either empty or an element of $\mathcal{W}$. If $W' = \emptyset$ then $z = z_h$; otherwise, we have $|z| \in L_{W'}$ for some $W' \in \mathcal{W}$. Either way, the inclusion $\mathsf{L}(x) \subseteq \bigcup_{W \in \mathcal{W}} \mathsf{L}_W$ holds.
	
	For the reverse inclusion, fix $W = (V \cup U) \in \mathcal{W}$ with $U \in \mathcal{U}$. We can assume without loss of generality that $W \neq \emptyset$. Note that we can write $z_h$ as 
	\begin{equation} \label{eq: Equation}
		z_h = c''_0 + \sum_{b_i \in U} \mathsf{n}(b_i) + \sum_{i = 1}^n c_i b_i^{e_i},	
	\end{equation}
	where $c''_0 \in\nn_0$. For each $b \in W$, there exists a summand $s_b = cb^e$ in the right-hand side of factorization~\ref{eq: Equation} such that $c \geq \mathsf{n}(b)$ (and $e \geq 0$). Consequently, by applying the identity $\mathsf{n}(b)b^{e} = \mathsf{d}(b)b^{e + 1}$ we can generate a factorization $z_1 \in \mathsf{Z}(x)$ such that $|z_1| = |z_h| + \mathsf{d}(b) - \mathsf{n}(b)$. Note that we can (re)apply the aforementioned identity as many times as we want given that $b < 1$, which means that for each $m_b \in\nn_0$ there exists a factorization $z^* \in \mathsf{Z}(x)$ such that $|z^*| = |z_h| + m_b(\mathsf{d}(b) - \mathsf{n}(b))$. Moreover, for distinct elements $b$ and $b'$ in $W$ we can carry out similar transformations on $s_b$ and $s_{b'}$ simultaneously. Then it is not hard to see that $L_W \subseteq \mathsf{L}(x)$ for each $W \in \mathcal{W}$. This, in turn, implies that $\bigcup_{W \in \mathcal{W}} \mathsf{L}_W \subseteq \mathsf{L}(x)$. 
	
	Now we proceed to prove the general case. Let $\mathcal{B}' = \mathcal{B}_{<1}$. Consider the canonical rational multicyclic monoids $M_{\mathcal{B}'}$ and $M_{\mathcal{B}\setminus\mathcal{B}'}$, which are clearly submonoids of $M_{\mathcal{B}}$. There is no loss in assuming that neither $M_{\mathcal{B}'}$ nor $M_{\mathcal{B}\setminus\mathcal{B}'}$ is the trivial Puiseux monoid $\{0\}$. Furthermore, $M_{\mathcal{B}\setminus\mathcal{B}'}$ is an FF-monoid by~\cite[Theorem 5.6]{GOTTI19}, and it is not hard to see that $\mathcal{L}(M_{\mathcal{B}'}) \cup \mathcal{L}(M_{\mathcal{B}\setminus\mathcal{B}'}) \subseteq \mathcal{L}(M_{\mathcal{B}})$ by Remark~\ref{remark: atoms of a canonical rational multicyclic monoid}. Now set
	\[
		\mathcal{D}(x) \coloneqq \left\{(y,y') \in  M_{\mathcal{B}\setminus \mathcal{B'}} \times M_{\mathcal{B'}} \mid x = y + y'\right\}.
	\]
	Note that $1 \leq |\mathcal{D}(x)| < \infty$ since there are only finitely many elements of $M_{\mathcal{B}\setminus\mathcal{B}'}$ dividing $x$ in $M_{\mathcal{B}}$. Thus,
	\begin{equation*}
		\begin{split}
			\mathsf{L}(x) = \mathsf{L}_{M_{\mathcal{B}}}(x) & = \bigcup_{(y,y') \in\mathcal{D}(x)} \mathsf{L}_{M_{\mathcal{B}\setminus\mathcal{B}'}}(y) + \mathsf{L}_{M_{\mathcal{B}'}}(y')\\
			& = \bigcup_{(y,y') \in\mathcal{D}(x)} \left(\bigcup_{l \in \mathfrak{L}(y)} l + \mathsf{L}_{M_{\mathcal{B}'}}(y')\right)\\
			& = \bigcup_{(y,y') \in\mathcal{D}(x)}\, \bigcup_{l \in \mathfrak{L}(y)} \left(l + \mathsf{L}_{M_{\mathcal{B}'}}(y')\right),
		\end{split}
	\end{equation*}
	where $\mathfrak{L}(y) = \mathsf{L}_{M_{\mathcal{B}\setminus\mathcal{B}'}}(y)$. We already established that $\mathsf{L}_{M_{\mathcal{B}'}}(y')$ is the union of finitely many MAPs for all $y' \in M_{\mathcal{B'}}$. This, along with the fact that $\mathfrak{L}(y)$ and $\mathcal{D}(x)$ are finite sets, implies that $\mathsf{L}(x)$ is the union of finitely many MAPs. Note that either $\mathsf{L}(x)$ is finite or $\mathsf{n}(b)b^l \mid_{M_{\mathcal{B}}} x$ for some $b \in\mathcal{B}_{< 1}$ and $l \in\nn_0$.   
	
	The direct implication of the second statement of $(1)$ follows readily after our previous observation. As for the reverse implication, note that $z = \mathsf{n}(b)b^l$ is the hub factorization of $y = \pi(z)$. From what we just proved, it follows that $|\mathsf{L}(y)| = \infty$ which, in turn, implies that $|\mathsf{L}(x)| = \infty$ given that $y \,|_{M_{\mathcal{B}}}\, x$. 
	
	Now we proceed to prove $(2)$. Let $z_0 \in \mathsf{Z}(x)$ be a factorization of minimum length of $x$ and let $l = |z_0|$. There is no loss in assuming that $\mathcal{B} \neq \emptyset$, so take $b \in\mathcal{B}$. Let $z \in \mathsf{Z}(x)$. In virtue of Lemma~\ref{lemma: any factorization of an element can be transformed into the hub factorization of that element}, there exist factorizations $z = z_1, \ldots, z_n = z_h \in \mathsf{Z}(x)$ such that $z_h$ is the hub factorization of $x$ and $\big||z_i| - |z_{i + 1}|\big| = |\mathsf{n}(b) - \mathsf{d}(b)|$ for all $i \in\llbracket 1,n-1 \rrbracket$. Similarly, there exist factorizations $z_0 = z'_1, \ldots, z'_m = z_h \in \mathsf{Z}(x)$ such that $\big||z'_i| - |z'_{i + 1}|\big| = |\mathsf{n}(b) - \mathsf{d}(b)|$ for all $i \in\llbracket 1,m-1 \rrbracket$. Since $z_0$ is a factorization of minimum length of $x$, we have $|z| = |z_0| + k |\mathsf{n}(b) - \mathsf{d}(b)|$ for some $k \in \nn_0$. Thus, 
	\begin{equation} \label{inclusion: set of lengths is a subset of an arithmetic progression}
	\mathsf{L}(x) \subseteq \left\{ l + k\cdot|\mathsf{n}(b) - \mathsf{d}(b)| : k\in\nn_0 \right\}.
	\end{equation}
	Now suppose by contradiction that $|\Delta(x)| > 1$. Since the inclusion~\ref{inclusion: set of lengths is a subset of an arithmetic progression} holds, there exists $t \in\nn_{>1}$ such that $t\,|\mathsf{n}(b) - \mathsf{d}(b)| \in\Delta(x)$. This implies that there exist $z,z' \in\mathsf{Z}(x)$ such that $|z| - |z'| = t\,|\mathsf{n}(b) - \mathsf{d}(b)|$ and $\mathsf{L}(x) \cap [|z'|,|z|] = \{|z'|, |z|\}$. We have two possible cases, either $|z_h| \leq |z'|$ or $|z| \leq |z_h|$. Assume that $|z_h| \leq |z'|$. In virtue of Lemma~\ref{lemma: any factorization of an element can be transformed into the hub factorization of that element}, there exist factorizations $z = z''_1, \ldots, z''_s = z_h \in \mathsf{Z}(x)$ such that the equality $\big||z''_i| - |z''_{i + 1}|\big| = |\mathsf{n}(b) - \mathsf{d}(b)|$ holds for all $i \in\llbracket 1,s-1 \rrbracket$. This implies that there exists $j \in \llbracket 1,s \rrbracket$ such that $|z'| < |z''_j| < |z|$, but this is a contradiction. On the other hand, if $|z| \leq |z_h|$ then, as before, there exist factorizations $z' = z^*_1, \ldots, z^*_r = z_h \in \mathsf{Z}(x)$ such that $\big||z^*_i| - |z^*_{i + 1}|\big| = |\mathsf{n}(b) - \mathsf{d}(b)|$ for all $i \in\llbracket 1,r-1 \rrbracket$ by Lemma~\ref{lemma: any factorization of an element can be transformed into the hub factorization of that element}. Again, this implies that there exists $j \in \llbracket 1,r \rrbracket$ such that $|z'| < |z^*_j| < |z|$, but this is a contradiction. Hence $|\Delta(x)| \leq 1$, and our proof concludes.
\end{proof}

%\begin{remark}
%	In \cite[Theorem 4.3.11]{AGHK06}, A. Geroldinger and F. Halter-Koch proved that, for certain BF-monoids $M$, there exists $N \in\nn$ such that every $L \in\mathsf{L}(M)$ is an AAMP with some difference $d \in\Delta(M)$ and bound $N$. Unfortunately, this does not hold for canonical rational multicyclic monoids in general: Let $M_{\mathcal{B}}$ be a canonical rational multicyclic monoid such that $b > 1$ and $b \not\in\nn$ for each $b \in\mathcal{B}$. In virtue of \cite[Theorem 6.3]{GOTTI19}, $M_{\mathcal{B}}$ is an FF-monoid. However, $\mathsf{L}(M_{\mathcal{B}})$ contains infinitely many singletons. Indeed, it is not hard to see that $z = \sum_{i = 1}^{N} b^i$ is the unique factorization of $\pi(z)$ for every $N \in\nn$. This implies that 
%\end{remark}

Note that Theorem~\ref{theorem: characterization of sets of lengths of canonical multicyclic monoids} does not hold for all Puiseux monoids. The following example exhibits an atomic Puiseux monoid with an element whose set of lengths is not the union of finitely many MAPs.

\begin{example}
	Let $M = \langle 1/p \mid p\in\mathbb{P} \rangle$. It is not hard to see that $M$ is atomic with $\mathcal{A}(M) = \{1/p \mid p\in\mathbb{P}\}$. Moreover, $\mathsf{L}(1) = \mathbb{P}$. Since arbitrarily large prime gaps exist, $\mathsf{L}(1)$ is not the union of finitely many MAPs.
\end{example}

Next we prove that nontrivial canonical rational multicyclic monoids satisfy the Structure Theorem for Unions of Sets of Lengths (as stated in \cite[Theorem~4.2]{GaGe09}). 

\begin{proposition}
	Let $M_{\mathcal{B}}$ be a canonical rational multicyclic monoid such that $\mathcal{B} \neq \emptyset$. Then there exist constants $N,K \in\nn$ such that for all $k \geq K$ we have that\, $\mathcal{U}_k(M_{\mathcal{B}})$ is an AAP with difference $d = \min \Delta(M_{\mathcal{B}})$ and bound $N$.
\end{proposition}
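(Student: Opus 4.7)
The plan is to verify the hypotheses of \cite[Theorem~4.2]{GaGe09} directly for $M_{\mathcal{B}}$, building on the characterization of sets of lengths proved in Theorem~\ref{theorem: characterization of sets of lengths of canonical multicyclic monoids}. Set $D \coloneqq \{|\mathsf{n}(b)-\mathsf{d}(b)| : b \in \mathcal{B}\}$ and $d' \coloneqq \gcd D$. The first order of business is to show that $\Delta(M_{\mathcal{B}})$ is finite and, in fact, that $d \coloneqq \min\Delta(M_{\mathcal{B}}) = d'$. By Theorem~\ref{theorem: characterization of sets of lengths of canonical multicyclic monoids}(1), every $\mathsf{L}(x)$ is a finite union of MAPs whose differences come from $D$, so $\Delta(M_{\mathcal{B}}) \subseteq \llbracket 1,\max D\rrbracket$; moreover the trade structure implies that every element of $\Delta(M_{\mathcal{B}})$ is a $\zz$-linear combination of the entries of $D$, hence a multiple of $d'$, giving $d' \mid d$. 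A Bezout-type construction then realizes $d'$ as an honest distance of a carefully chosen element of $M_{\mathcal{B}}$, yielding $d = d'$.

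Next I would establish the coset containment $\mathcal{U}_k(M_{\mathcal{B}}) \subseteq k + d\zz$ for all $k$. By Lemma~\ref{lemma: any factorization of an element can be transformed into the hub factorization of that element} and Remark~\ref{remark: we can transform the hub factorization into any other factorization}, any two factorizations of a common element are connected by a chain of one-step moves, each altering length by $\pm|\mathsf{n}(b)-\mathsf{d}(b)|$ for some $b \in\mathcal{B}$, and each such quantity is a multiple of $d$. At the same time I would obtain a linear bound $\min\mathcal{U}_k \geq \lambda_* k - N_0$ via the elasticity estimate implicit in the hub normal form, and---when $\mathcal{B}_{<1} = \emptyset$ (the case in which $M_{\mathcal{B}}$ is an FF-monoid by Theorem~\ref{theorem: atomic description of trivial GMCs}(1))---a matching upper bound $\max\mathcal{U}_k \leq \lambda^* k + N_0$; when $\mathcal{B}_{<1} \neq \emptyset$, the set $\mathcal{U}_k$ is infinite for large $k$, so only the lower bound is needed and the tail $S''$ in the AAP definition is empty.

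The substantive step, and the part I expect to be the main obstacle, is to fill $\mathcal{U}_k$ with the full arithmetic progression $(k + d\zz) \cap [\min\mathcal{U}_k + dN, \sup\mathcal{U}_k - dN]$ for a suitable $N$, thereby producing the required interior AP of the AAP. To accomplish this I would fix a witness $y_0 \in M_{\mathcal{B}}$ with $d = \min\Delta(y_0)$ and apply Theorem~\ref{theorem: characterization of sets of lengths of canonical multicyclic monoids}(1) to produce, for each sufficiently large $n$, elements $ny_0$ whose set of lengths contains a long AP of common difference $d$. Adding such multiples of $y_0$ as ``padding'' to an arbitrary factorization of length $k$ would force $\mathcal{U}_k$ to cover every residue representative in $k + d\zz$ on a wide interval, while the short initial (and, in the $\mathcal{B}_{<1}=\emptyset$ case, terminal) fringes remain uniformly bounded. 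The delicate point is to ensure that the bound $N$ does not depend on $k$: this requires careful bookkeeping of the generator-wise Bezout trades needed to realize $d=\gcd D$, together with the overhead of combining trades across different primitive generators.
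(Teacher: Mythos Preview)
Your proposal is substantially more elaborate than the paper's argument and appears to misidentify what \cite[Theorem~4.2]{GaGe09} actually requires. The paper's proof occupies only a few lines: it first shows that $\Delta(M_{\mathcal{B}})$ is a nonempty finite set (by the same trade reasoning you sketch, bounding every distance by $D = \max_{b\in\mathcal{B}}|\mathsf{n}(b)-\mathsf{d}(b)|$), then cites \cite[Proposition~4.9]{ScGG2019} to obtain some $k$ with $|\mathcal{U}_k(M_{\mathcal{B}})|=\infty$, hence $\rho_k(M_{\mathcal{B}})=\infty$, and finally applies \cite[Theorem~4.2]{GaGe09} as a black box. Those two conditions---finite nonempty $\Delta$ and an infinite local elasticity---are precisely the hypotheses of the Gao--Geroldinger theorem.

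By contrast, the items you list beyond finiteness of $\Delta$ (the coset containment $\mathcal{U}_k\subseteq k+d\zz$, the linear bounds on $\min\mathcal{U}_k$ and $\max\mathcal{U}_k$, and above all the ``filling'' of $\mathcal{U}_k$ by a long arithmetic progression of difference $d$) constitute the \emph{conclusion} of that theorem, not its premises. In effect you are re-deriving the structure theorem from scratch in this special case, including the $k$-uniform bookkeeping you yourself flag as delicate, when a one-line citation suffices. Your side computation $\min\Delta(M_{\mathcal{B}})=\gcd\{|\mathsf{n}(b)-\mathsf{d}(b)|:b\in\mathcal{B}\}$ is neither needed nor claimed in the paper, and your case split on whether $\mathcal{B}_{<1}=\emptyset$ likewise becomes unnecessary once the black-box theorem is invoked.
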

\begin{proof}
	Our first goal is to show that $\Delta(M_{\mathcal{B}})$ is a nonempty finite set. For this purpose, let $D = \max_{b \in\mathcal{B}}\{|\mathsf{n}(b) - \mathsf{d}(b)|\}$ which is well defined given that $\mathcal{B} \neq \emptyset$, and consider a factorization $z = \sum_{i = 0}^{n} c_i b_i^{e_i} \in \mathsf{Z}(x)$ of a nonzero element $x$ with $n,c_i,e_i \in\nn_0$ and $b_i \in \mathcal{B}$ for every $i \in \llbracket 0,n \rrbracket$. If the inequalities $\mathsf{n}(b_i) \leq c_i$ and $b_i < 1$ hold for some $i \in \llbracket 0,n \rrbracket$ then by using the identity $\mathsf{n}(b_i) b_i^{e_i} = \mathsf{d}(b_i)b_i^{e_i + 1}$ we can generate a factorization $z' \in \mathsf{Z}(x)$ such that $0 < |z'| - |z| \leq D$. Similarly, if the inequalities $c_i \geq \mathsf{d}(b_i)$ and $b_i > 1$ hold for some $i \in \llbracket 1,n \rrbracket$ then there exists $z^* \in \mathsf{Z}(x)$ satisfying that $0 < |z^*| - |z| \leq D$. On the other hand, if $c_i < \min\{\mathsf{n}(b_i), \mathsf{d}(b_i)\}$ for $i \in \llbracket 1,n \rrbracket$ and $c_0 < \mathsf{n}(b)$ for all $b \in \mathcal{B}_{<1}$ then $z \in \mathsf{Z}(x)$ is the factorization of maximum length of $x$ by Remark~\ref{remark: we can transform the hub factorization into any other factorization}. Hence $\Delta(M_{\mathcal{B}})$ is a finite set. Note that $\Delta(M_{\mathcal{B}}) \neq \emptyset$ since $\mathcal{B} \neq \emptyset$. In virtue of \cite[Proposition 4.9]{ScGG2019}, there exists $k \in\nn$ such that $|\mathcal{U}_k(M_\mathcal{B})| = \infty$ which, in turn, implies that $\rho_k(M_{\mathcal{B}}) = \infty$. Our result follows after \cite[Theorem 4.2]{GaGe09}.
\end{proof}

\subsection{Arithmetic progressions as sets of distances}

As we mentioned before, determining the set of distances of a given monoid is, in general, a difficult task. In~\cite[Corollary 4.8]{BCKR06} Bowles et al. proved that, for $n,d$ positive integers, every set of the form $\{d, 2d, \ldots, nd\}$ occurs as the set of distances of some numerical monoid, while Geroldinger and Zhong proved in~\cite[Theorem 4.1]{GeQZ2019} that the set of distances of a transfer Krull monoid over an abelian group is an interval. We conclude this paper by proving that, for $d$ a positive integer, every set of the form $\{nd \mid n\in\nn\}$ occurs as the set of distances of some Puiseux monoid.  

\begin{proposition}
	Let $d$ be a positive integer. Then there exists a Puiseux monoid $M$ such that $\Delta(M) = \{md \mid m \in\nn\}$.
\end{proposition}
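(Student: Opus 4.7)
The plan is to construct a Puiseux monoid $M$ generated by an infinite family $\{b_m\}_{m \in \nn}$ of proper fractions whose differences $|\mathsf{n}(b_m) - \mathsf{d}(b_m)|$ run through all positive multiples of $d$. Inductively choose distinct primes $q_1 < q_2 < \cdots$ with $q_m \geq md + 2$ and $q_m \not\equiv md \pmod{q_{m'}}$ for each $m' < m$; only finitely many congruence classes must be avoided at each step, so such primes exist by Dirichlet's theorem. Set $b_m \coloneqq (q_m - md)/q_m$, so that $\mathsf{n}(b_m) = q_m - md \geq 2$, $\mathsf{d}(b_m) = q_m$, and the denominators are pairwise coprime. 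Define $M \coloneqq \langle b_m^n \mid m \in \nn,\, n \in \nn_0 \rangle$. Although the generating set is infinite (so $M$ is not a canonical rational multicyclic monoid in the sense of Section~4), the methods of that section adapt: the infinite-generator analog of Proposition~\ref{prop: sufficient condition for being an atom} yields $\mathcal{A}(M) = \{1\} \cup \{b_m^n \mid m, n \in \nn\}$, and the analogs of Lemmas~\ref{lemma: hub factorization unique iff M is canonical} and~\ref{lemma: any factorization of an element can be transformed into the hub factorization of that element} show that any factorization of a nonzero element reduces to a unique hub factorization via elementary moves $\mathsf{n}(b_m) b_m^e \leftrightarrow \mathsf{d}(b_m) b_m^{e + 1}$, each of which changes the length by exactly $md$.

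For the inclusion $\Delta(M) \subseteq \{md \mid m \in \nn\}$, any two factorizations $z, z' \in \mathsf{Z}(x)$ of a nonzero $x \in M$ both reduce to the common hub factorization, so $|z| - |z'|$ is a $\zz$-linear combination of the integers $\{md \mid m \in \nn\}$, hence a multiple of $d$. Therefore $\mathsf{L}(x) \subseteq l + d\zz$ for some $l$, and $\Delta(M) \subseteq d\nn = \{md \mid m \in \nn\}$.

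For the reverse inclusion, fix $m \in \nn$ and take $x_m \coloneqq \mathsf{n}(b_m) \cdot b_m = (q_m - md)^2/q_m$. The pure-$b_m$ factorizations $\mathsf{n}(b_m) \cdot b_m$ and $\mathsf{d}(b_m) \cdot b_m^2$ realize lengths $q_m - md$ and $q_m$. I claim that $\mathsf{L}(x_m) \cap [q_m - md, q_m] = \{q_m - md, q_m\}$. Since $\mathsf{v}_{q_m}(x_m) = -1$ while $\mathsf{v}_{q_m}(b_{m'}^n) \geq 0$ for $m' \neq m$, every $z \in \mathsf{Z}(x_m)$ must use some $b_m$-atom with positive exponent. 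For each $m' \neq m$, the block $S_{m'} \coloneqq \sum_n c_{m', n} b_{m'}^n$ has denominator supported only at $q_{m'}$; since $\mathsf{v}_{q_{m'}}(x_m) = 0$ (by the congruence condition on $q_m$) and every other summand of $z$ has non-negative $\mathsf{v}_{q_{m'}}$-valuation, the ultrametric inequality forces $\mathsf{v}_{q_{m'}}(S_{m'}) \geq 0$, and hence $S_{m'} \in \nn_0$. Setting $I \coloneqq c_0 + \sum_{m' \neq m} S_{m'} \in \nn_0$, the $b_m$-portion of $z$ has value $x_m - I$. Letting $e$ be the largest exponent of $b_m$ used in $z$: if $e \geq 2$, clearing denominators and reducing modulo $q_m$ forces $q_m \mid c_e$, so $|z| \geq c_e \geq q_m$; if $e = 1$, the equation $c_1(q_m - md) = (q_m - md)^2 - I q_m$ combined with $\gcd(q_m - md, md) = 1$ (which follows from $q_m > md$) and the positivity $c_1 \geq 1$ forces $I = 0$ and $c_1 = q_m - md$, so $|z| = q_m - md$. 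Hence no factorization of $x_m$ has length strictly between $q_m - md$ and $q_m$, and consequently $md \in \Delta(x_m) \subseteq \Delta(M)$.

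Combining the two inclusions gives $\Delta(M) = \{md \mid m \in \nn\}$, as required. The main obstacle is the third step: controlling factorizations of $x_m$ that mix $b_m$-atoms with atoms $b_{m'}^n$ for $m' \neq m$. This is neutralized by the $\mathsf{v}_{q_{m'}}$-valuation argument, which shows that each cross-block $S_{m'}$ must aggregate into a non-negative integer, so the behavior of $|z|$ is governed entirely by the pure $b_m$-part and the analysis reduces to the rational-cyclic case already covered by Theorem~\ref{theorem: characterization of sets of lengths of canonical multicyclic monoids}.
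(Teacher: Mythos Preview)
Your proof is correct and takes a genuinely different route from the paper's. Both constructions build an infinitely generated Puiseux monoid with pairwise-coprime prime denominators and obtain the inclusion $\Delta(M)\subseteq d\nn$ from the elementary-move argument of Lemma~\ref{lemma: any factorization of an element can be transformed into the hub factorization of that element}. The difference lies in the reverse inclusion. The paper pairs up generators so that the differences $d(2k-1)$ and $2dk$ occur together, takes the element $x_{2k}$ whose hub factorization uses both, invokes Remark~\ref{remark: when all primitive generators are proper fractions there is no loss in assuming that the monoid is a canonical rational multicyclic monoid} together with the increasing-numerator condition to confine $\mathsf{Z}(x_{2k})$ to a finite canonical submonoid, and then reads off $\{d,2d,\dots,(2k-1)d\}\subseteq\Delta(x_{2k})$ from the explicit two-dimensional arithmetic progression $\mathsf{L}(x_{2k})$ produced by Theorem~\ref{theorem: characterization of sets of lengths of canonical multicyclic monoids}. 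You instead assign one generator $b_m$ to each target difference $md$, impose the congruence conditions $q_m\not\equiv md\pmod{q_{m'}}$ (for $m'<m$; the case $m'>m$ is automatic from $q_{m'}>q_m-md$), and use the $q_{m'}$-adic valuation to force every foreign block $S_{m'}$ in a factorization of $x_m=\mathsf{n}(b_m)b_m$ to be a nonnegative integer, reducing the problem to the rational-cyclic monoid $M_{b_m}$. Your approach is more elementary---it bypasses the structural description of $\mathsf{L}(x)$ and the increasing-numerator bookkeeping---at the cost of the extra congruence hypotheses on the primes. The paper's approach, by contrast, recycles the machinery already developed in Section~4 and yields many distances from a single element. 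One small remark: your parenthetical ``which follows from $q_m>md$'' really uses that $q_m$ is prime, since $\gcd(q_m-md,md)=\gcd(q_m-md,q_m)$ and the latter equals $1$ because $0<q_m-md<q_m$.
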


\begin{proof}
	Let $p_2, p_3, \ldots $ be a sequence of different prime numbers such that $(p_n - dn)_{n \in\nn_{\geq 2}}$ is increasing. We can assume without loss of generality that $p_n > dn + 1$ for all $n \in\nn_{\geq 2}$. Now consider the Puiseux monoid $M \coloneqq \langle b^s \mid b \in\mathcal{B},\, s \in\nn_0 \rangle$, where
	\[
		\mathcal{B} = \left\{\frac{p_{2k} - 2dk}{p_{2k}},\,\, \frac{p_{2k + 1} - 2dk + d}{p_{2k + 1}} \,\, \bigg|\,\,  k\in\nn\right\}.
	\]
First, $M$ is atomic by Proposition~\ref{prop: sufficient condition for being an atom}. Second, a nonzero element $x \in M$ has exactly one factorization of minimum length $z_h(x)$ by Lemma~\ref{lemma: factorization of minimum length when all primitive generators are proper fractions}. 
Third, given a nonzero element $x \in M$, there exists a canonical rational multicyclic monoid $M_{\mathcal{B'}}$ with $\mathcal{B'} \subseteq \mathcal{B}$ such that $x \in M_{\mathcal{B'}}$ and $\mathsf{Z}_M(x) = \mathsf{Z}_{M_{\mathcal{B'}}}(x)$. Indeed, if some positive power of an element $b \in\mathcal{B}$ divides $x$ in $M$ then either $\mathsf{n}(b)\, |_M\, x$ or some positive power of $b$ shows in the factorization $z_h(x)$ by Remark~\ref{remark: when all primitive generators are proper fractions there is no loss in assuming that the monoid is a canonical rational multicyclic monoid}, but the numerators of the elements of $\mathcal{B}$ formed an increasing sequence. This last property implies that, when analyzing $\mathsf{L}(x)$ for a fixed $x \in M$, there is no loss in assuming that $M$ is a canonical rational multicyclic monoid. 
Now let 
	\[
		z_k = (p_{2k} - 2dk)\left(\frac{p_{2k} - 2dk}{p_{2k}}\right)^2 + (p_{2k + 1} - 2dk + d)\left(\frac{p_{2k + 1} - 2dk + d}{p_{2k + 1}}\right)^2 \in\mathsf{Z}(x_{2k}),
	\]
where $k\in\nn$. Now fix $k \in \nn$. In virtue of Lemma~\ref{lemma: factorization of minimum length when all primitive generators are proper fractions}, $z_k$ is the factorization of minimum length of $x_{2k}$. Moreover, $\mathsf{L}(x_{2k}) = \{|z_k| + k_1(2dk) + k_2d(2k - 1) \mid k_1, k_2 \in\nn_0\}$ by the argument used in the first part of the proof of Theorem~\ref{theorem: characterization of sets of lengths of canonical multicyclic monoids}. Clearly, $d(2k - 1) \in\Delta(x_{2k})$. Now let $h \in \llbracket 1,2k-2 \rrbracket$, and take $z_1, z_2 \in \mathsf{Z}(x_{2k})$ such that $|z_1| = |z| + h(2dk)$ and $|z_2| = |z| + d(h + 1)(2k	- 1)$. Suppose, by way of contradiction, that there exists $z_3 \in\mathsf{Z}(x_{2k})$ such that $|z_1| < |z_3|< |z_2|$. Thus,
	\[
		h(2dk) < t_0(2dk) + s_0d(2k - 1) < d(h + 1)(2k - 1)
	\]
	for some nonnegative integers $t_0$ and $s_0$. After some computations, the first inequality yields that $h < t_0 + s_0$, and from the second inequality we obtain $t_0 + s_0 < h + 1$. This contradiction proves that our hypothesis is untenable. Hence $d(2k - h - 1) \in\Delta(x_{2k})$ for $h \in \llbracket 1,2k-2 \rrbracket$. In other words, we have $\{d, 2d, \ldots, (2k - 1)d\} \subseteq \Delta(x_{2k})$ for all $k\in\nn$. Consequently, the inclusion $\{md \mid m\in\nn\} \subseteq \Delta(M)$ holds. Conversely, let $z, z' \in \mathsf{Z}(x)$ be two factorizations of a nonzero element $x$ such that $|z| - |z'|$ is a distance of $x$. In virtue of Lemma~\ref{lemma: factorization of minimum length when all primitive generators are proper fractions}, there exist factorizations $z = z_1, \ldots, z_n = z_h \in \mathsf{Z}(x)$ such that $z_h$ is the factorization of minimum length of $x$ and $\big||z_i| - |z_{i + 1}|\big| = |\mathsf{n}(b) - \mathsf{d}(b)|$ for some $b \in \mathcal{B}$ and all $i \in\llbracket 1,n-1 \rrbracket$. Consequently, we have that $d$ divides $|z| - |z_h|$. Similarly, $d$ divides $|z'| - |z_h|$, which implies that the elements of $\Delta(x)$ are divisible by $d$ for all nonzero $x \in M$. This, in turn, implies that $\Delta(x) \subseteq \{md \mid m\in\nn\}$. Therefore, our result follows.
\end{proof}

	\section{Acknowledgments}
	
	The author wants to thank Felix Gotti for his guidance throughout the different stages of this manuscript and for many useful conversations about factorization theory. The author extends his thanks to anonymous referees whose feedback improve the final version of this paper. While working on this manuscript, the author was supported by the University of Florida Mathematics Department Fellowship.

\end{document}